\DeclareMathOperator*{\argmin}{arg\,min}
\newtheorem{theorem}{Theorem}[section]
\newtheorem{lemma}[theorem]{Lemma}
\newtheorem{proposition}[theorem]{Proposition}
\newtheorem{definition}[theorem]{Definition}
\newtheorem{remark}[theorem]{Remark}
\newtheorem{example}[theorem]{Example}
\newcommand\CC{\hbox{C\kern -.58em {\raise .54ex \hbox{$\scriptscriptstyle |$}}\kern-.55em {\raise .53ex \hbox{$\scriptscriptstyle |$}}}}
\newcommand\NN{\hbox{I\kern-.2em\hbox{N}}}
\newcommand\RR{\mathbb{R}}
\newcommand\ZZ{{{\rm Z}\kern-.28em{\rm Z}}}
\newcommand\ds{ \displaystyle }
\newcommand\bF{{\mathbf F}}
\newcommand\bU{{\mathbf U}}
\newcommand\ba{{\mathbf a}}
\newcommand\bb{{\mathbf b}}
\newcommand\bc{{\mathbf c}}
\newcommand\be{{\mathbf e}}
\newcommand\bk{{\mathbf k}}
\newcommand\br{{\mathbf R}}
\newcommand\bu{{\mathbf u}}
\newcommand\bv{{\mathbf v}}
\newcommand\bw{{\mathbf w}}
\newcommand\bx{{\mathbf x}}
\newcommand\bz{{\mathbf z}}
\newcommand\mA{{\mathcal A}}
\newcommand\mI{{\mathcal I}}
\newcommand\mK{{\mathcal K}}
\newcommand\mE{{\mathcal E}}
\newcommand\mC{{\mathcal C}}
\newcommand\mP{{\mathcal P}}
\def\signFF{\bigskip\bigskip\hspace{80mm}
\vbox{{\sc Francis Filbet\par\vspace{3mm}
Universit\'e de Toulouse III \& IUF \par
UMR5219, Institut de Math\'ematiques de Toulouse,\par
118, route de Narbonne\par
F-31062 Toulouse cedex,  FRANCE
\par\vspace{3mm}e-mail:} francis.filbet@math.univ-toulouse.fr }}
\def\signCP{\bigskip\bigskip\hspace{80mm}
\vbox{{\sc  C\'eline Parzani\par\vspace{3mm}
Ecole Nationale de l'Aviation Civile\par 
Laboratoire ENAC, \'equipe OPTIM\par
7 avenue Édouard-Belin, \par
BP 54005 Toulouse, FRANCE
\par\vspace{3mm}e-mail:}  celine.parzani@enac.fr }}
\begin{document}

\title[Vision based collision avoidance models]{On a three dimensional vision based collision avoidance model}

\author{C\'eline Parzani and Francis Filbet}

\maketitle

\begin{abstract}
This paper presents a three dimensional collision avoidance approach for aerial vehicles
inspired by coordinated behaviors in biological groups.
The proposed strategy aims to enable a group of vehicles to converge to a 
common destination point 
avoiding collisions with each other and with moving obstacles in their environment. 
The interaction rules lead the agents to adapt their velocity vectors
through a modification of 
the relative bearing angle and the relative elevation. Moreover the model satisfies the limited field 
of view constraints resulting from individual perception sensitivity.  
 
From the proposed individual based model, a mean-field kinetic model is derived.
Simulations are performed to show the effectiveness of the proposed model. 
 
\end{abstract}

\vspace{0.1cm}

\noindent 
{\small\sc Keywords.}  {\small  collision avoidance, Individual-based models.}

\tableofcontents

\section{Introduction} 
\setcounter{equation}{0}
\label{sec:1}
In this paper we are interested in swarm modelling which represents the collective behavior 
of interacting agents of similar size and shape such that insects, birds or aerial vehicles. Inside the swarm, 
agents communicate with each other, working together to accomplish tasks and reach goals. 
As an example, in the last few years, the use of unmanned aerial vehicles swarm has been widely developed 
for numerous applications including monitoring of natural disasters, industrial accidents, surveillance of crowds, 
sensing in large environments, search and rescue missions, searching for sources of pollution, closed observation of 
protected areas and many others (see for instance \cite{Kopfstedt} or \cite{Han}). Main advantages are that 
the considered swarm can cover quickly a large area only requiring 
one operator or can scan high-risk sites rapidly whereas large vehicle cannot. 
All of these real-world challenges motivate serious investigations on how to control multiple vehicles cooperating automatically to accomplish a given task. 

On the other hand, nature provides great examples of decentralized, coordinated behaviors in groups 
of living organisms. Indeed, it is surprising how swarms of insects or flocks of birds
can travel in large, dense groups without colliding (see
\cite{Bonabeau, Camazine, Giardina} and \cite{Parrish}). Even in
the presence of external obstacles these agents are able to avoid collisions smoothly and such biological groups are 
remarkably effective at maintaining optimized group
structure, detecting and avoiding obstacles and predators, and
performing other complex tasks. Observing animals or pedestrians
collective motion, remarkable patterns are achieved by following simple rules. Such impressive inter-agent
coordination is accomplished despite their natural physiological constraints. Although individual agents 
have limited sensing capability and cannot see the whole formation, they can
form a flock with no apparent leader, which implies the lack of a centralized command. 
This highly coordinated collective behavior emerges from localized interactions among individuals within the swarm.

In this context, the objective of this paper is to propose a three dimensional model for a swarm of aerial vehicles 
inspired by coordinated behaviors of such biological groups. The following key points will be taken into account.
First, the model will be based on a sequence of simple rules followed by every individual (microscopic level). 
Then, it will include constraints related to limited sensor information. Moreover, since many applications occur 
in a high density traffic environment, the model will result in safe paths for all individuals.

To reach our objective, we consider an interacting particle system for the collective behavior
of swarms \cite{Carillo-1,Carillo-2}. In behavioral based methods, all the agents are considered equal and they
adopt behaviors built on informations coming from their only
neighborhood. Thanks to the feedback shared between
neighboring agents, these methods are following a decentralized
approach. In high
density traffic situations, it is recommended  to use a decentralized
coordination \cite{Hoekstra2000}, even if there is less freedom for maneuver. However, it is usually difficult to 
predict the group behavior, 
and the stability of the formation is generally not easy to prove either. These
methods are among the first to have been used in motion planning for multi-agent systems
as they are easily stated and generally efficiently scalable since their rules are supposed to be
implemented independently for each agent.

Safe paths is related to collision avoidance which plays an important role in the context
of managing multiple vehicles. It has been an active area of research in the field
of robotics using the collision cone method
\cite{ChakravarthyGhose1998} and the inevitable collision states
approach \cite{FraichardAsama2004, GomezFraichard2009}.  The collision
cone approach can be used to determine whether two objects, of
irregular shapes and arbitrary sizes, are on a collision course. It
has been the basis for many collision/obstacle avoidance algorithms \cite{ChakravarthyGhose1998}.
These methods are developed
with robotic application with  knowledge about the obstacles
(position, velocity, and acceleration)
\cite{GomezFraichard2009}. There have been also some research on
aircraft collision avoidance both from the multiple vehicles and the
air traffic control points of view. All these collision avoidance
procedures are based on three steps : see, detect, and avoid
\cite{Lacher2007}. But most of the algorithms developed
for air traffic management are those that guarantee safe trajectories
in a very low density traffic involving only two or three
aircraft. Another approach for collision avoidance is artificial
potential based methods where individuals are treated like charged
particles  of same charge that repel each other; whereas the
destination of an individual  is modeled as a charge of the opposite
sign so as to attract or navigate it toward the destination. The
artificial potential methods are susceptible to local minima and
require breaking forces \cite{Eby1994, EbyKelly1999}. 

In this paper, our goal is first to develop a three dimensional dynamical
approach describing the motions of $N$ individual and interacting
particles, when $N$ becomes large. The model is inspired from the ones developed in
\cite{Degond-1, Degond-2} and \cite{Tiwari, Tiwari2} for pedestrians 
collective motion in 2D but here we are concerned with 3D motion of aerial vehicles or birds which leads to an enhanced 
but more complex dynamics. 
Based on the vision based approach, we propose a model decomposed in two phases for collision avoidance 
including both particle to particle and moving obstacles avoidance. 

When dealing with large populations, in both cases one faces the well-known problem of the
curse of dimensionality, term first coined by Bellman precisely in the context of dynamic optimization: 
the complexity of numerical computations of the solutions of the above problems blows up as the size of
the population increases.  A possible way out is the so-called
mean-field approach, where the individual influence of the entire population on the dynamics of a single 
agent is replaced by an averaged one. This  substitution  principle  results  in  a  unique  mean-field 
equation  and  allows  the  computation  of solutions, cutting loose from the dimensionality.
Therefore, we perform a mean field limit of the microscopic model  to replace self-interactions 
between particles by self-consistent fields. The mean field approximation corresponds to the case where 
the force itself depends on some average of 
the distribution function. As a consequence, binary interactions between particles are not described 
but instead their global effect on each particle is taken into account. 
This approximation is justified especially in the configuration where the swarm is very closed to 
the target and therefore identifying binary interaction is very complex.  
As a result, we obtain a space-inhomogeneous kinetic PDE. 

The remainder of the paper is organized as
follows. In Section \ref{sec:2}, we present the individual agent based model proposed for 
self-propelled particle swarms including collision avoidance. In Section \ref{sec:3}, the associated 
mean-field limit is formally derived and analysed. 
Section \ref{sec:4} is devoted to numerical experiments of the microscopic model. We conclude with 
final remarks and future works in Section \ref{sec:5}. 

\section{Agent-based model for collision avoidance}
\setcounter{equation}{0}
\label{sec:2}

We are interested in modeling the motion of  individuals (vehicles, birds,..) with the objective to drive each 
individual of 
the swarm to a target point $\bx_T$ without colliding with any moving
obstacles or other individuals. 

Since we consider a swarm we do not explicitly constrain the relative location of each individual. 
This section is devoted to the presentation of the microscopic model  considering $N$ particles with position $\bx_i(t) \in \RR^3$ and
velocity $\bv_i(t) \in\RR^3$, with $1\leq i \leq N$. Then, we derive a
three-dimensional interacting particle system based on collision avoidance. The agent-based 
model we consider is inspired 
from the one proposed in \cite{Degond-1},\cite{Degond-2}  and \cite{Moussaid} developed for crowd dynamics. In these references, the heuristic-based model 
proposes that pedestrians follow a rule composed of two
phases:
\begin{enumerate} 
\item a perception phase; 
\item  a decision-making phase. 
\end{enumerate}

In the perception phase, the subjects make
an assessment of the dangerousness of the possible encounters in all the possible directions of motion. 
In the decision-making phase, they turn towards the direction which minimizes
the distance walked towards their target while avoiding encounters
with other pedestrians. Here, we mainly follow the same assumptions to
describe the perception phase, but then the individual changes its direction in order to diminish the 
probability of  collision. 

%%% Acceleration & breaking
As we will describe later particles may accelerate or break smoothly according
to their distance to the target, but during the collision
avoidance process,  a sudden change of speed in the air is not
realistic, hence particles will only change their own
direction. Therefore, in the perception and decision making phases, we
assume that particles move with a constant speed, which means that
interacting particles cannot evaluate the change of speed of each
other. Of course in some situations, avoidance may fail when the
relative distance is too small or the relative velocity is too large
or when particles are not fast enough to change their direction. This
corresponds to physical situations where a crash cannot be
systematically avoided.  

\subsection{Perception Phase} We consider a particle $i\in\{1,\ldots,N\}$ located at a position $\bx_i(t)\in \RR^3$, with a 
velocity $\bv_i(t)$, interacting with a collision partner $j\in\{1,\ldots,N\}$ located at a position $\bx_j(t)\in \RR^3$, with a 
velocity $\bv_j(t)$. The sketch of the binary encounter between these
two particles is depicted in Figure \ref{fig:1}. In the sequel we
denote by $\langle.,.\rangle$ the usual scalar product in $\RR^3$,
$$
\langle \bu,\bv\rangle \,=\, \sum_{i=1}^3 u_i\,v_i
$$ 
and by  $|\bu|=\sqrt{\langle \bu\,,\,\bu\rangle}$ the associated norm.

We assume that $t=t^0$ is the time when particle $i$ evaluates the likeliness of a collision with particle $j$. 
This evaluation is made by supposing that each one maintains its velocity $\bv_i$, (respectively $\bv_j$) constant.
As depicted in Figure \ref{fig:1}, we introduce two notable points $\bar \bx_i$ and $\bar \bx_j$ that we define just below.

\begin{definition}
\label{def:int-points}
The interaction points $\bar \bx_i$ (resp. $\bar \bx_j$) of particle $i$ (resp. $j$) in their interaction
is the point $\bx_i(t)$ on the $i$-th particle's trajectory (resp. $\bx_j(t)$ on the $j$-th particle's
trajectory) such that $|\bx_i(t) - \bx_j(t)|$ is minimal, {\it i.e.}
$$
|\bar \bx_i - \bar \bx_j| = \min_{t\in \RR}|\bx_i(t) - \bx_j(t)|.
$$
\end{definition}

%%%%%%%%%%% FIGURE 1 %%%%%%%%%%%
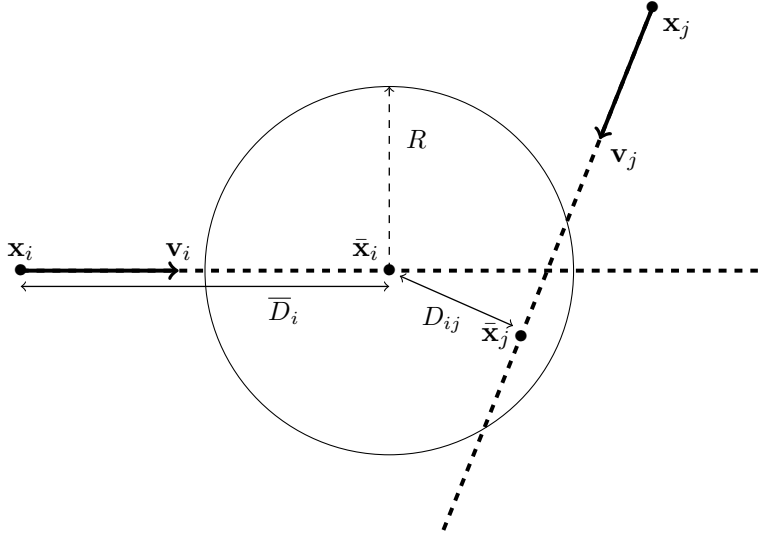
\begin{figure}[ht!]
\begin{center}
\begin{tikzpicture}
[scale=0.7]
\coordinate (xi) at (-10,0) ;
\coordinate (vi) at (-7,0) ;
\coordinate (xibar) at (-3,0) ;
\coordinate (xiend) at (4,0) ;
\coordinate (xj) at (2,5) ;
\coordinate (vj) at (1.,2.5) ;
\coordinate (xjbar) at (-0.5,-1.25) ;
\coordinate (xjend) at (-2,-5) ;
\coordinate (Rend) at (-3,3.5) ;
%\draw [fill = gray!20, thick] (xibar)--(-2,-3.354) arc (-73:73:3.5)--(xibar);
%\draw [fill = gray!40, thick] (xibar)--(-2.5,-1.677) arc (-73:73:1.75)--(xibar);
\draw [line width=1.5pt] [dashed] (xi)-- (xiend) ;
\draw [line width=1.5pt] [dashed] (xj)-- (xjend) ;
\draw [line width=1.5pt] [->] (xi)-- (vi) ;
\draw [line width=1.5pt] [->] (xj)-- (vj) ;
\coordinate (A) at (-10,-0.3) ;
\coordinate (B) at (-3,-0.3) ;
\draw [line width=0.5pt] [<->] (A)-- (B) ;
\coordinate (Dibar) at (-5.,-0.3) ;
\draw (Dibar) node[below ]{\small $\overline{D}_i$} ;
\coordinate (A) at (-2.8,-0.1) ;
\coordinate (B) at (-0.65,-1.05) ;
\draw [line width=0.5pt] [<->] (A)-- (B) ;
\draw (xi) node[above]{$\bx_i$} node{$\bullet$};
\draw (vi) node[above]{$\bv_i$} ;
\draw (xibar) node[above left]{$\bar \bx_{i}$} node{$\bullet$} ;
\draw (xj) node[below right ]{$\bx_j$} node{$\bullet$};
\draw (vj) node[below right]{$\bv_j$} ;
\draw (xjbar) node[left]{$\bar \bx_{j}$} node{$\bullet$};
\draw (xibar) circle (3.5) ;
\coordinate (Dij) at (-2.,-0.5) ;
\draw (Dij) node[below ]{\small $D_{ij}$} ;
\coordinate (R) at (-2.1,2.5) ;
\draw (R) node[left]{\small $R$} ;
\draw [line width=0.5pt][dashed] [->] (xibar)-- (Rend) ;
%\coordinate (Rcrit) at (-2.5,-1.25) ;
%\draw (Rcrit) node[left]{\small $R_{\rm c}$} ;
\end{tikzpicture}
\end{center}
\caption{\label{fig:1}Sketch of a binary encounter between two particles in 2D showing the key distances of the perception phase:   
the Minimal Distance $D_{ij}$ (distance between $\bar \bx_{i}$ and $\bar \bx_{j}$) and the Distance-To-Interaction $\overline{D}_i$ 
of particle $i$ in its interaction with particle $j$ (distance between the current particle position $\bx_i$ and $\bar \bx_i$). 
The circle with radius $R$ delimits the safety region for the particle $i$.
}
\end{figure}

\begin{definition}
\label{def:Dmin-TTI-DTI}
The interaction between particles $i$ and $j$ leads to define three key quantities associated to perception phase:
\begin{itemize}
\item The minimal distance $D_{ij}$ represents the smallest distance which separates the two particles $i$ and $j$ supposing 
that they cruise on a straight line at constant velocities $\bv_i$ and $\bv_j$. From Definition \ref{def:int-points}, the minimal distance 
is then the distance between the interaction points such that
$$
D_{ij} = |\bar \bx_i - \bar \bx_j|.
$$

\item The time-to-interaction $\tau_{ij}$ is the time needed by the subject to reach the
interaction point $\bar \bx_i$ from his current position $\bx_i =
\bx_i(t^0)$ at time $t^0$, which is counted positive if this
time belongs to the future of the subject and negative if it belongs to the past. Then, $\tau_{ij}$ is the value of $t$ for which the quantity $|\bx_i(t) - \bx_{j}(t)|$ is minimal. 

\item The distance-to-interaction $\overline{D}_i$ is the distance which separates the subject\textquotesingle s
current position $\bx_i = \bx_i(t^0)$ to the interaction point $\bar \bx_i$. The distance-to-interaction is counted positive
if the interaction point is reached in the future and negative if the
interaction point was crossed in the past:
$$
\overline{D}_i = {\rm sign}(t-t^0)\,|\bx_i - \bar \bx_i|,
$$
where $sign(t)$ denotes the sign of $t$. 
\end{itemize}
\end{definition}

\begin{remark}
Notice that the quantities $D_{ij}$ and $\tau_{ij}$ are symmetric with
respect to $i$ and $j$. Moreover, here, we have  supposed that each individual has a  perfect knowledge of
its own and partner’s positions and velocities, and we assume that
they are able to estimate or to compute the distance-to-interaction,
the minimal distance and the time to interaction with perfect accuracy
from the knowledge of $(\bx_i,\bv_i)$ and $(\bx_j,\bv_j)$.
\end{remark}

Let us now compute $\tau_{ij}$, $\overline{D}_i$ and $D_{ij}$ assuming
that a particle $i$ with a phase space position $(\bx_i,\bv_i)$ can detect
an interaction's partner $j$ located in its perception region with a
position $\bx_j$ and velocity $\bv_j$. We follow the same strategy as for two dimensional pedestrian
flow \cite{Degond-1} and  denoting by $\bx_i$ and $\bx_j$ the positions of
the two particles at time $t^0$, we define the distance 
$D(t)$ between the two particles at time $t \in (t^0, t^0+\delta t)$  by
\begin{equation}
D^2(t) = |\bx_j + \bv_j(t-t^0) - (\bx_i + \bv_i(t-t^0))|^2 
\label{Distance-2particles}
\end{equation}

Therefore, for each particle $i$ and its interaction partner $j$, we
have the following result.
 
\begin{proposition}
\label{prop:0}
The value of the time to interaction for the particle $i$, $\tau_{ij}$
is 
\begin{equation}
\label{TimeToInteraction}
\tau_{ij} = -\frac{\langle \bx_j-\bx_i \,,\, \bv_j- \bv_i\rangle}{|\bv_j-\bv_i|^2}, 
\end{equation}
whereas the distance to interaction $\overline{D}_i$ of particle $i$ and the minimal distance $D_{ij}$ are given by 
\begin{equation}
\label{Distance2a}
\displaystyle\overline{D}_i = -\frac{\langle\bx_j-\bx_i \,,\,\bv_j- \bv_i\rangle}{|\bv_j-\bv_i|^2}|\bv_i|
\end{equation}
and
\begin{equation}
\label{Distance2b}
 \displaystyle{D_{ij} = \left(|\bx_j - \bx_i|^2 - \left(\frac{\langle \bx_j-\bx_i \,,\,\bv_j-
  \bv_i\rangle}{|\bv_j-\bv_i|}\right)^2\right)^{1/2}}.
\end{equation} 
\end{proposition}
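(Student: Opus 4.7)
The proof is a direct minimization of the quadratic function $t\mapsto D^2(t)$ defined in \eqref{Distance-2particles}. The plan is to expand this function as a quadratic polynomial in the shifted variable $s := t-t^0$, find its unique minimizer, and then read off the three quantities $\tau_{ij}$, $\overline{D}_i$ and $D_{ij}$ from the definitions.

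First, I would use the identity $|\bu + s\bw|^2 = |\bu|^2 + 2s\langle \bu,\bw\rangle + s^2|\bw|^2$ with $\bu = \bx_j-\bx_i$ and $\bw = \bv_j-\bv_i$ to rewrite \eqref{Distance-2particles} as a strictly convex quadratic in $s$, assuming implicitly that $\bv_j\neq \bv_i$ (otherwise the minimum is not uniquely attained and the formulae degenerate). Differentiating with respect to $s$ and setting the derivative to zero produces a unique critical point, and by Definition \ref{def:Dmin-TTI-DTI} this critical value of $s$ is exactly $\tau_{ij}$, which immediately gives \eqref{TimeToInteraction}.

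Next, since particle $i$ moves on a straight line at constant velocity $\bv_i$, one has $\bar\bx_i = \bx_i + \bv_i\,\tau_{ij}$, hence $|\bx_i - \bar\bx_i| = |\bv_i|\,|\tau_{ij}|$. The sign convention in Definition \ref{def:Dmin-TTI-DTI} imposes $\mathrm{sign}(t-t^0) = \mathrm{sign}(\tau_{ij})$, so $\overline{D}_i = \tau_{ij}\,|\bv_i|$; inserting \eqref{TimeToInteraction} yields \eqref{Distance2a}. Finally, substituting $s=\tau_{ij}$ into the expanded quadratic produces a cancellation of half the quadratic term with the cross term, leaving
\[
D_{ij}^2 \,=\, |\bx_j - \bx_i|^2 \,-\, \frac{\langle \bx_j-\bx_i\,,\,\bv_j-\bv_i\rangle^{2}}{|\bv_j-\bv_i|^{2}},
\]
and taking the positive square root gives \eqref{Distance2b}.

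There is no real difficulty in the argument; the only minor point worth checking is that the expression under the square root in \eqref{Distance2b} is nonnegative, which follows from the Cauchy--Schwarz inequality applied to the vectors $\bx_j-\bx_i$ and $\bv_j-\bv_i$. The degenerate case $\bv_i = \bv_j$ is tacitly excluded, consistently with the fact that two particles with identical velocities never have a well-defined unique interaction point.
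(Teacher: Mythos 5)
Your proof is correct and follows essentially the same route as the paper: both minimize the quadratic $D^2(t)$ (the paper by completing the square, you by differentiating, which is the same computation), then obtain $\overline{D}_i = \tau_{ij}\,|\bv_i|$ and $D_{ij} = D(t^0+\tau_{ij})$. Your added remarks on the Cauchy--Schwarz nonnegativity and the degenerate case $\bv_i=\bv_j$ are sensible but do not change the substance of the argument.
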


\begin{proof}
On the one hand, the value of the time to interaction for the particle
$i$, is  obtained by minimizing the quadratic function of time 
(\ref{Distance-2particles}) such that 
$$
D^2(t) = |\bv_j  - \bv_i |^2 \left( (t-t^0) + \frac{\langle\bx_j  - \bx_i\,,\,\bv_j  - \bv_i\rangle}{|\bv_j  - \bv_i |^2} \right)^2 + |\bx_j  - \bx_i |^2  - \frac{\langle\bx_j  - \bx_i\,,\,\bv_j  - \bv_i\rangle^2}{|\bv_j  - \bv_i |^2}, 
$$
hence it gives $\tau_{i,j}$ as in (\ref{TimeToInteraction}). Then,  the distance to interaction $\overline{D}_i$ of particle $i$ is  given by the distance traveled by this particle
during the time to interaction, that is,  $\overline{D}_i =
\tau_{ij}|\bv_i|$ where $\tau_{ij}$ is given by Definition
\ref{TimeToInteraction}. This leads to $\overline{D}_i$ as in
(\ref{Distance2a}). 

On the other hand, the minimal distance $D_{ij}$ is given by the
minimal value of (\ref{Distance-2particles}), it gives  $D_{ij} = D(t^0+\tau_{ij})$, 
which leads to (\ref{Distance2b}).
\end{proof}

The objective of the perception phase is to describe the configuration corresponding to a potential collision of the 
particle $i$ with the surrounding particles. From the definitions of
the minimal distance and time-to-interaction,  we consider that a collision may occur between particle $i$ and particle $j$ when the following conditions are satisfied. 
\begin{itemize}
\item First, we need  $\tau_{ij} > 0$ that means that we observe in the future.  
\item Second, if we define a safety zone for the particle $i$ delimited by the circle of radius $R$ as depicted in Figure 
\ref{fig:1} then collision will occur if $D_{ij} \le  R$. 
\end{itemize}
Combining these two conditions mean that in the future, the
trajectories of each particle will encounter inside the safety
zone. Therefore, we define the set of particles which may interact
with a particle $i$ located at $(\bx_i,\bv_i)\in\RR^3\times\RR^3$ at
time $t^0$, as 
$$
\mI_i(t^0) = \Big\{ j \in\{0,\dots,N\}, \,\tau_{ij} > 0,\,   D_{ij} \le  R \Big\}. 
$$
However, some restrictions related to the perception sensitivity of the individual (vision, sensors, etc) has also to be taken into account. 
As a consequence, considering a test particle $i$ interacting with
another particle $j\in \mI_i(t^0)$, we restrict the set of potential 
partner collision to those belonging to the ``vision cone'' of particle $i$ denoted $\mC_i$.  
This region is represented for instance as the blue area in Figure \ref{fig:2} and model the set of positions for the particle $j\in \mI_i(t^0)$ that are seen 
by the particle $i$. Let us now define the ``vision cone'' $\mC_i$ precisely. 

\begin{definition}[Vision cone]
\label{VisionCone}
Introducing a threshold number $\kappa \in [-1, 1]$, the ``vision cone'' $\mC_i$ for the particle $i$ is the cone centered at $\bx_i$ 
with angle $\cos^{-1}(\kappa)$ about the direction $\bv_i$.
\end{definition}
%%%%% FF : vision cone
\begin{remark}
Observe that in Definition \ref{VisionCone}, we choose the vision
cone such that it has an
infinite radius, but the relative distance and velocity between two
particles will be taken into account thanks to the parameter
$\tau_{ij}>0$, where the collision avoidance's frequency will be a
decreasing function of $\tau_{ij}$. However, the present model can be
adapted without any difficulty to the case where the vision cone is
also limited by its distance.
\end{remark}

To summarize the perception phase, for each particle $i$ we define the
set of interaction's partners as the set 
\begin{equation}
\label{Ki}
\mK_i(t^0) \,=\,  \left\{ j\in \mI_i(t^0),\,\, \bx_j \in \mC_i\right\}.
\end{equation}

So we now detail the Decision Making Phase in order to model collision avoidance.

\subsection{Decision Making Phase}
\label{sec:2.2}
First let us emphasize  that the  three dimensional 
swarm modeling is quite different from the two dimensional case
encountered in collision avoidance for pedestrians or robots \cite{Moussaid}. Indeed,
in the three dimensional case, 
particles cannot suddenly stop or brake! 

Here we consider the motion of a particle $ i\in\{1,\ldots, N\}$ with position and
velocity $(\bx_i,\bv_i)\in\RR^3\times\RR^3$, which interacts with a particle
$j\in\{1,\ldots,N\}$ located at $(\bx_j,\bv_j)\in\RR^3\times\RR^3$. Depending on the position of the interaction points $(\bar \bx_i, \bar \bx_j)\in\RR^3\times\RR^3$, the collision avoidance procedure leads to consider three configurations:
\begin{itemize}
\item Safe configuration (illustrated in Figure \ref{fig:2}-(a)), where the particle $i$ does not change its
  direction and continues its cruise\,;
\item Blind configuration (illustrated in Figure \ref{fig:2}-(b)),
  where a collision is likely, but particle $i$ does not see $j$,
  hence  it continues its cruise whereas $j$ is expected to modify its
  direction\,;

\item Unsafe configuration (illustrated on Figure \ref{fig:2}-(c)), where the
  particle $i$ has detected an interaction's partner $j$ and both of them
  modify their direction.%without modifying their polar angle.
\end{itemize}

%%%%%%%%%%% FIGURE 2 %%%%%%%%%%%
\begin{figure}[ht!]
\begin{center}
\begin{tabular}{ccc}
\begin{tikzpicture}[scale=0.3]
\coordinate (O) at (0,0) ;
\coordinate (x) at (-4,-2) ;
\coordinate (y) at (8,0) ;
\coordinate (z) at (0,8);
\draw [line width=0.5pt] [->] (O)-- (x) ;
\draw [line width=0.5pt] [->] (O)-- (y) ;
\draw [line width=0.5pt] [->] (O)-- (z) ;
\draw (O) node[below]{$O$};
\draw (x) node[above]{$x$};
\draw (y) node[below]{$y$};
\draw (z) node[above]{$z$};
\coordinate (xi) at (2,3) ;
\draw (xi) node[below]{$\bx_i$} node{$\bullet$};
\coordinate (vi) at (6,3) ;
\draw [line width=1.5pt] [->] (xi)-- (vi) ;
\coordinate (xj) at (1,5) ;
\draw (xj) node[above]{$\bx_j$} node{$\bullet$};
\coordinate (vj) at (-1,2) ;
\draw [line width=1.5pt] [->] (xj)-- (vj) ;
\draw [fill=blue!80!black,opacity=0.05](xi) -- +(45:8cm) arc(45:-45:8cm) -- cycle ;
%\draw [fill=red!80!black,opacity=0.05](xj) -- +(45:8cm) arc(45:-45:8cm) -- cycle ;
\end{tikzpicture} &
\begin{tikzpicture}[scale=0.3]
\coordinate (O) at (0,0) ;
\coordinate (x) at (-4,-2) ;
\coordinate (y) at (8,0) ;
\coordinate (z) at (0,8);
\draw [line width=0.5pt] [->] (O)-- (x) ;
\draw [line width=0.5pt] [->] (O)-- (y) ;
\draw [line width=0.5pt] [->] (O)-- (z) ;
\draw (O) node[below]{$O$};
\draw (x) node[above]{$x$};
\draw (y) node[below]{$y$};
\draw (z) node[above]{$z$};
\coordinate (xi) at (2,3) ;
\draw (xi) node[above]{$\bx_i$} node{$\bullet$};
\coordinate (vi) at (4,3) ;
\draw [line width=1.5pt] [->] (xi)-- (vi) ;
\coordinate (xj) at (-3,1) ;
\draw (xj) node[below right ]{$\bx_j$} node{$\bullet$};
\coordinate (vj) at (1,1) ;
\draw [line width=1.5pt] [->] (xj)-- (vj) ;
\draw [fill=blue!80!black,opacity=0.05](xi) -- +(45:8cm) arc(45:-45:8cm) -- cycle ;
%\draw [fill=red!80!black,opacity=0.05](xj) -- +(45:8cm) arc(45:-45:8cm) -- cycle ;
\end{tikzpicture} & 
%%%%
\begin{tikzpicture}[scale=0.3]
\coordinate (O) at (0,0) ;
\coordinate (x) at (-4,-2) ;
\coordinate (y) at (8,0) ;
\coordinate (z) at (0,8);
\draw [line width=0.5pt] [->] (O)-- (x) ;
\draw [line width=0.5pt] [->] (O)-- (y) ;
\draw [line width=0.5pt] [->] (O)-- (z) ;
\draw (O) node[below]{$O$};
\draw (x) node[above]{$x$};
\draw (y) node[below]{$y$};
\draw (z) node[above]{$z$};
\coordinate (xi) at (1,-1) ;
\draw (xi) node[below]{$\bx_i$} node{$\bullet$};
\coordinate (vi) at (4,2) ;
\draw [line width=1.5pt] [->] (xi)-- (vi) ;
\coordinate (xj) at (3,6) ;
\draw (xj) node[below right ]{$\bx_j$} node{$\bullet$};
\coordinate (vj) at (3,4) ;
\draw [line width=1.5pt] [->] (xj)-- (vj) ;
\draw [fill=blue!80!black,opacity=0.05](xi) -- +(90:8cm) arc(90:0:8cm) -- cycle ;
%\draw [fill=red!80!black,opacity=0.05](xj) -- +(-45:8cm) arc(-45:-135:8cm) -- cycle ;
\end{tikzpicture} 
\\
(a)  & (b) & (c) 
\end{tabular}
\end{center}
\caption{\label{fig:2} Depending on the cone definition of the
  particle $i$, several configurations occur: (a) safe configuration where
  the two particles do not interact (b)
  blind configuration where $i$ does not interact with $j$, but $j$ is
  expected to change its direction (c) unsafe configuration where both
  particles will change their direction. }
\end{figure}
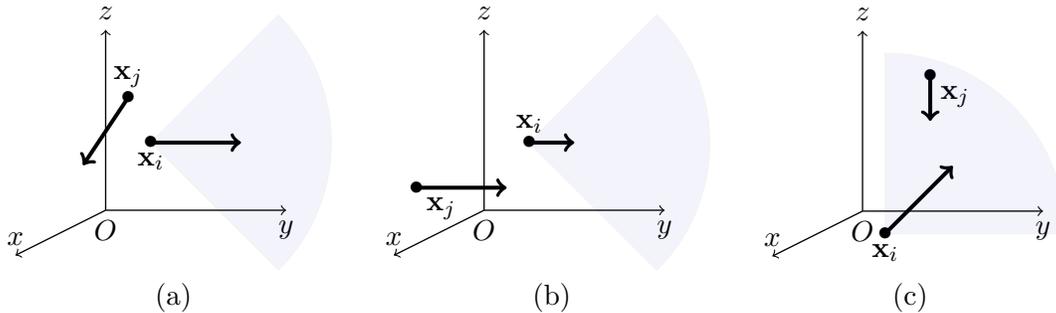

To describe more precisely this turning process,  we introduce the local frame of the particle
$i\in\{1,\ldots,N\}$  centered at position $\bx_i(t)\in\RR^3$, and  denoted by $(\be_{\rho_i}, \be_{\phi_i},
\be_{\theta_i})$  with  $\rho_i(t)=|\bv_i(t)|$, $\theta_i\in(0,2\pi)$ the azimuthal
angle and $\phi_i \in (0,\pi)$ the polar angle giving that $\bv_i=\rho_i\,\be_{\rho_i}$.

The collision avoidance model proposed below is based on the situation
where a particle $i\in\{1,\ldots,N\}$ interacts with another one $j\in
\mK_i(t^0)$ and  will modify its direction but preserve its speed,
that is, $\rho_i(t)$ is maintained constant during this process. To
determine this turning rate and the rotation axis, we need to
define some indicators on occurrence of collisions. The first indicator of the dangerousness of the collision is the time
$\tau_{ij}$, which indicates the remaining time before a collision
occurs.  The second  indicator   measured by
particle $i$, is the time derivative of the relative bearing angle or
azimuthal angle $\alpha_{ij}\in(0,2\pi)$ and the relative polar angle
$\beta_{ij}\in (0,\pi)$ formed  in its own frame between the direction  $\bv_i$ and the position $\bx_j$ of particle $j\in \mK(t^0)$ 
as depicted in Figure \ref{fig:3}.

To define rigorously these two angles and their time derivative we
need to consider the frame $(\be_{\rho_i}, \be_{\phi_i}, \be_{\theta_i})$ of
the particle $i$ at position $\bx_i\in\RR^3$ with velocity
$\bv_i\in\RR^3$.  

\begin{definition}[relative azimuthal and polar angles] 
Consider the local frame $(\be_{\rho_i}, \be_{\phi_i}, \be_{\theta_i})$ centered in at $\bx_i$ of
the particle $i\in\{1,\ldots,N\}$, and denote by $j\in\mK_i(t^0)$ its
collision partner located at $(\bx_j,\bv_j)\in\RR^6$. We define 
\begin{itemize}
\item the relative bearing or azimuthal angle $\alpha_{ij}\in (0,2\pi)$ as the azimuthal
angle of point $\bx_j$ with respect to the plane containing the point
$\bx_i$ and formed by the two vectors $(\be_{\rho_i}, \be_{\phi_i})$;

\item the relative polar angle $\beta_{ij}\in (0,\pi)$ as the
  polar angle of point $\bx_j$ with respect to the vector
  $\be_{\theta_i}$.

\end{itemize}
\label{def:bp}
\end{definition}

The choice of $\alpha_{ij}$ and $\beta_{ij}$ is here somehow arbitrary as long as
we obtain an orthonormal basis as we will see below.

%%%%%%%%%%% FIGURE 3 %%%%%%%%%%%
\tdplotsetmaincoords{60}{110}
\pgfmathsetmacro{\rvec}{.9}
\pgfmathsetmacro{\thetavec}{30}
\pgfmathsetmacro{\phivec}{60}

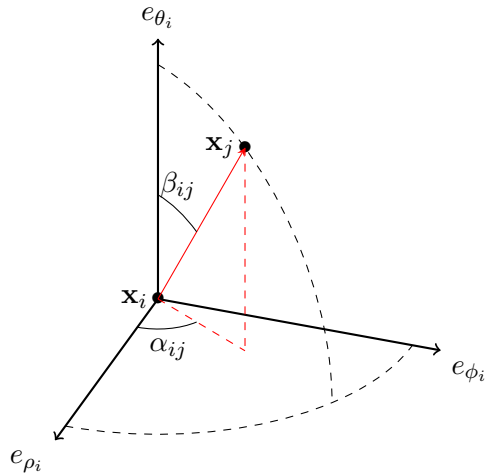
\begin{figure}[ht!]
\begin{center}
\begin{tikzpicture}[scale=4,tdplot_main_coords]
\coordinate (O) at (0,0,0);
\tdplotsetcoord{P}{\rvec}{\thetavec}{\phivec}
\draw[thick,->] (0,0,0) -- (1,0,0) node[anchor=north east]{$e_{\rho_i}$};
\draw[thick,->] (0,0,0) -- (0,1,0) node[anchor=north west]{$e_{\phi_i}$};
\draw[thick,->] (0,0,0) -- (0,0,1) node[anchor=south]{$e_{\theta_i}$};
\draw (O) node[left]{$\bx_{i}$} node{$\bullet$};
\draw (P) node[left]{$\bx_{j}$} node{$\bullet$};
\draw[-stealth,color=red] (O) -- (P);
\draw[dashed, color=red] (O) -- (Pxy);
\draw[dashed, color=red] (P) -- (Pxy);
\tdplotdrawarc{(O)}{0.2}{0}{\phivec}{anchor=north}{$\alpha_{ij}$}
\tdplotsetthetaplanecoords{\phivec}
\tdplotdrawarc[tdplot_rotated_coords]{(0,0,0)}{0.4}{0}{\thetavec}{anchor=south}{$\beta_{ij}$}
\draw[dashed,tdplot_rotated_coords] (\rvec,0,0) arc (0:90:\rvec);
\draw[dashed] (\rvec,0,0) arc (0:90:\rvec);
\end{tikzpicture}
\end{center}
\caption{\label{fig:3} Definition of the relative bearing angle $\alpha_{ij}\in (0,2\pi)$ as the azimuthal
angle of point $\bx_j$ in the frame $(\be_{\rho_i}, \be_{\phi_i}, \be_{\theta_i})$ centered at $\bx_i$ and of 
the relative polar angle $\beta_{ij}\in (0,\pi)$ as the polar angle of point $\bx_j$ in the frame 
$(\be_{\rho_i}, \be_{\phi_i}, \be_{\theta_i})$ centered at $\bx_i$.
}
\end{figure}

We also introduce the unit vector $\bk_{ij}$ of the line connecting the two
particles and the distance $d_{ij}$ between the particles. These quantities are defined by the following relations:
\begin{equation} 
\left\{
\begin{array}{ll}
d_{ij}(t) = |\bx_j(t) - \bx_i(t)| , 
\\ 
\,
\\
\ds\bk_{ij}(t) = \frac{\bx_j(t) - \bx_i(t)}{d_{ij}(t)}.
\end{array}\right.
\label{eq:defk}
\end{equation}

Then we perform a new change of frame with
respect to $\bx_i-\bx_j$, and introduce the orthonormal frame
defined as $(\bk_{ij}, \be_{\beta_{ij}}, \be_{\alpha_{ij}})$, where 
\begin{equation}
\left\{
\begin{array}{l}
\be_{\beta_{ij}} = \cos(\beta_{ij})\,\cos(\alpha_{ij})\,\be_{\rho_i}  \,+\,
  \cos(\beta_{ij})\,\sin(\alpha_{ij})\,\be_{\phi_i} \,-\,
  \sin(\beta_{ij})\,\be_{\theta_i}\,,
\\
\,
\\
\be_{\alpha_{ij}} = -\sin(\alpha_{ij})\,\be_{\rho_i} \,+\,
  \cos(\alpha_{ij})\,\be_{\phi_i}.
\end{array}\right.
\label{def:e}
\end{equation}
Notice that in three dimensions,  there are several possibilities to
define relative azimuthal and polar angles, but this choice is
arbitrary as long as we obtain an orthonormal basis. Then we  compute the time derivative of $\alpha_{ij}$ and
$\beta_{ij}$ which will be a key indicator in the  collision avoidance
process.

\begin{lemma}
Assume that particles $(i,j)$ are at time $t^0$ at positions $\bx_i$
and $\bx_j$, and move with constant velocity $\bv_i$ and
$\bv_j$. Then
\begin{equation}
\label{eq:bearing}  
\dot \beta_{ij} \,=\, \frac{1}{d_{ij}} \,\langle \bv_j - \bv_i ,
    \be_{\beta_{ij}} \rangle,\quad  \sin(\beta_{ij})\,\dot \alpha_{ij} \,=\, \frac{1}{d_{ij}}  \,\langle \bv_j - \bv_i ,   \be_{\alpha_{ij}} \rangle,
\end{equation}
\label{lmm:1}
\end{lemma}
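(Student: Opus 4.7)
The plan is to express the relative position $\bx_j - \bx_i = d_{ij}\,\bk_{ij}$ in the local spherical coordinates attached to particle $i$, then differentiate in time and project onto the angular directions. Since both particles move with constant velocities on the considered interval, the basis $(\be_{\rho_i},\be_{\phi_i},\be_{\theta_i})$ is a fixed orthonormal frame of $\RR^3$: the spherical angles $\phi_i,\theta_i$ defining $\bv_i$ are constant in time, so the time derivative acts only on the scalar spherical coordinates of $\bx_j-\bx_i$ in this frame.

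Concretely, using Definition \ref{def:bp} I would first write
\[
\bk_{ij} \,=\, \sin(\beta_{ij})\cos(\alpha_{ij})\,\be_{\rho_i} \,+\, \sin(\beta_{ij})\sin(\alpha_{ij})\,\be_{\phi_i} \,+\, \cos(\beta_{ij})\,\be_{\theta_i}.
\]
Direct partial differentiation in $\beta_{ij}$ and $\alpha_{ij}$ then gives $\partial_{\beta_{ij}} \bk_{ij} = \be_{\beta_{ij}}$ and $\partial_{\alpha_{ij}} \bk_{ij} = \sin(\beta_{ij})\,\be_{\alpha_{ij}}$ with the vectors defined in \eqref{def:e}, and as a by-product confirms that $(\bk_{ij},\be_{\beta_{ij}},\be_{\alpha_{ij}})$ is orthonormal.

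The final step combines the chain rule
\[
\dot{\bk}_{ij} \,=\, \dot\beta_{ij}\,\be_{\beta_{ij}} \,+\, \sin(\beta_{ij})\,\dot\alpha_{ij}\,\be_{\alpha_{ij}}
\]
with the time derivative of $\bx_j - \bx_i = d_{ij}\,\bk_{ij}$, namely $\bv_j - \bv_i = \dot d_{ij}\,\bk_{ij} + d_{ij}\,\dot{\bk}_{ij}$. Taking the scalar product of this identity with $\be_{\beta_{ij}}$ and then with $\be_{\alpha_{ij}}$, and invoking the orthogonality of these two vectors to $\bk_{ij}$, delivers the two claimed formulas after dividing by $d_{ij}$. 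There is no substantive obstacle in this argument; the only point requiring a word of justification is the time-independence of the spherical frame of particle $i$ itself, which is guaranteed by the standing assumption that $\bv_i$ is held constant throughout the perception and decision-making phase.
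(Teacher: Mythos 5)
Your proposal is correct and follows essentially the same route as the paper's proof: both write $\bk_{ij}$ in the (constant) spherical frame of particle $i$, differentiate to obtain $\dot\bk_{ij}=\dot\beta_{ij}\,\be_{\beta_{ij}}+\sin(\beta_{ij})\,\dot\alpha_{ij}\,\be_{\alpha_{ij}}$, and compare with the kinematic derivative of the relative position. The only cosmetic difference is that you differentiate the product $d_{ij}\,\bk_{ij}$ and project onto $\be_{\beta_{ij}}$, $\be_{\alpha_{ij}}$, whereas the paper differentiates the quotient $(\bx_j-\bx_i)/d_{ij}$ and identifies the two resulting expressions for $\dot\bk_{ij}$; these are equivalent.
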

\begin{proof}
By the definition of the relative bearing angle $\alpha_{ij}\in (0,2\pi)$ and
the relative polar angle $\beta_{ij}\in (0,\pi)$ , we can write:
$$ 
\bk_{ij} \,\,=\,\, \sin(\beta_{ij}) \,\cos(\alpha_{ij}) \,\be_{\rho_i} \,+\,
\sin(\beta_{ij}) \,\sin(\alpha_{ij}) \,\be_{\phi_i}   \,+\,
\cos(\beta_{ij}) \, \be_{\theta_i}. 
$$ 
Taking the time derivative of this relation and using the fact that
$(\be_{\rho_i}, \be_{\theta_i}, \be_{\phi_i})$  is constant since the motion of the
particle $i$ is supposed rectilinear with constant speed $\bv_i$, it  leads to 
\begin{eqnarray*}
\dot \bk_{ij} &=& \dot \beta_{ij} \,\left[
  \,\cos(\beta_{ij})\,\cos(\alpha_{ij})\,\be_{\rho_i} \,+\,
  \cos(\beta_{ij})\,\sin(\alpha_{ij})\,\be_{\phi_i} \,-\,
  \sin(\beta_{ij})\,\be_{\theta_i} \,\right]  
\\
&+& \,\sin(\beta_{ij})\,\dot\alpha_{ij} \, \,\left[
  \,-\sin(\alpha_{ij})\,\be_{\rho_i} \,+\,
  \cos(\alpha_{ij})\,\be_{\phi_i}\, \right],
\end{eqnarray*}
where we recognize the expression of the two unit vectors
$(\be_{\alpha_{ij}}, \be_{\beta_{ij}})$ constructed in (\ref{def:e})
by writing the point $\bx_j$ in spherical coordinates in the frame of
particle $i$. Hence we have 
\begin{equation}
\label{f:0}
\dot \bk_{ij}   = \dot \beta_{ij} \, \be_{\beta_{ij}} \,+\,  \sin(\beta_{ij})\,\dot\alpha_{ij} \, \be_{\alpha_{ij}}. 
\end{equation}
%%%%%% FF 
On the other hand, observing that 
$$
\dot{d_{ij}} \,=\,  \langle  \bv_{j}-\bv_i\,,\,
    \bk_{ij} \rangle
$$
%%%%%%%%%%%
and taking the time derivative of the first equation (\ref{eq:defk}),
it yields
\begin{eqnarray*}
\dot \bk_{ij}  &=&  \frac{d}{dt} \left( \frac{\bx_j - \bx_i}{d_{ij}} \right), 
\\
&=& \frac{1}{d_{ij}} \,\left[\, (\bv_{j}-\bv_i) \,-\,  \langle \bv_{j}-\bv_i\,,\,
    \bk_{ij} \rangle  \,  \bk_{ij}\,\right], 
\end{eqnarray*}
hence using the fact that $(\bk_{ij},   \be_{\alpha_{ij}}
\be_{\beta_{ij}} )$ constitutes an orthonormal basis, we finally have 
\begin{equation}
\label{f:1}
\dot \bk_{ij} \,=\,  \frac{1}{d_{ij}} \,\left[ \,\langle \bv_{j}-\bv_i ,
    \be_{\alpha_{ij}} \rangle  \,  \be_{\alpha_{ij}}  \,+\, \langle  \bv_{j}-\bv_i\,,\,
    \be_{\beta_{ij}} \rangle  \,  \be_{\beta_{ij}}\,\right]. 
\end{equation}
Identifying the two relations (\ref{f:0}) and (\ref{f:1}), we get $\dot \beta_{ij}$ and
$\sin(\beta_{ij})\,\dot \alpha_{ij}$, which gives rise to formula (\ref{eq:bearing}) for the derivative of
the relative bearing and polar angles. 
\end{proof}

%%%%%%%%%%% FIGURE 4 %%%%%%%%%%%
\begin{figure}[ht!]
\begin{center}
\begin{tabular}{cc}
\begin{tikzpicture}[scale=0.5]
\coordinate (O) at (0,0) ;
\coordinate (x) at (-4,-2) ;
\coordinate (y) at (8,0) ;
\coordinate (z) at (0,8);
\draw [line width=0.5pt] [->] (O)-- (x) ;
\draw [line width=0.5pt] [->] (O)-- (y) ;
\draw [line width=0.5pt] [->] (O)-- (z) ;
\draw (O) node[left]{$O$};
\draw (x) node[above]{$x$};
\draw (y) node[below]{$y$};
\draw (z) node[above]{$z$};
\coordinate (xi) at (-1,-2) ;
\draw (xi) node[below]{$\bx_i$} node{$\bullet$};
\coordinate (vi) at (1,0) ;
\draw [line width=1.5pt] [->] (xi)-- (vi) ;
\coordinate (xj) at (4,7) ;
\draw (xj) node[below right ]{$\bx_j$} node{$\bullet$};
\coordinate (vj) at (4,5) ;
\draw [line width=1.5pt] [->] (xj)-- (vj) ;
\draw [fill=blue!80!black,opacity=0.05](xi) -- +(80:11cm) arc(80:10:11cm) -- cycle ;
\draw [fill=red!80!black,opacity=0.05](xj) -- +(-55:11cm) arc(-55:-125:11cm) -- cycle ;
\coordinate (xibar) at (3,2) ;
\coordinate (xiend) at (8,7) ;
\coordinate (xjbar) at (4,1.5) ;
\coordinate (xjend) at (4,-3) ;
\draw [line width=1.5pt] [dashed] (xi)-- (xiend) ;
\draw [line width=1.5pt] [dashed] (xj)-- (xjend) ;
\draw (xibar) node[above left]{$\bar \bx_{i}$} node{$\bullet$} ;
\draw (xjbar) node[right]{$\bar \bx_{j}$} node{$\bullet$};
\draw (xibar) circle (2) ;
\draw [line width=0.5pt][dashed] [->] (xibar)-- (xjbar) ;
\end{tikzpicture} & 
%%%%
\begin{tikzpicture}[scale=0.5]
\coordinate (O) at (0,0) ;
\coordinate (x) at (-4,-2) ;
\coordinate (y) at (8,0) ;
\coordinate (z) at (0,8);
\draw [line width=0.5pt] [->] (O)-- (x) ;
\draw [line width=0.5pt] [->] (O)-- (y) ;
\draw [line width=0.5pt] [->] (O)-- (z) ;
\draw (O) node[left]{$O$};
\draw (x) node[above]{$x$};
\draw (y) node[below]{$y$};
\draw (z) node[above]{$z$};
\coordinate (xi) at (-1,-2) ;
\draw (xi) node[below]{$\bx_i$} node{$\bullet$};
\coordinate (vi) at (1,0) ;
\draw [line width=1.5pt] [->] (xi)-- (vi) ;
\coordinate (xj) at (2.5,-1) ;
\draw (xj) node[below right ]{$\bx_j$} node{$\bullet$};
\coordinate (vj) at (2.5,1.) ;
\draw [line width=1.5pt] [->] (xj)-- (vj) ;
\draw [fill=blue!80!black,opacity=0.05](xi) -- +(80:11cm) arc(80:10:11cm) -- cycle ;
\draw [fill=red!80!black,opacity=0.05](xj) -- +(55:11cm) arc(55:125:11cm) -- cycle ;
\coordinate (xibar) at (3.5,2.5) ;
\coordinate (xiend) at (8,7) ;
\coordinate (xjbar) at (2.5,3) ;
\coordinate (xjend) at (2.5,8) ;
\draw [line width=1.5pt] [dashed] (xi)-- (xiend) ;
\draw [line width=1.5pt] [dashed] (xj)-- (xjend) ;
\draw (xibar) node[right]{$\bar \bx_{i}$} node{$\bullet$} ;
\draw (xjbar) node[left]{$\bar \bx_{j}$} node{$\bullet$};
\draw (xibar) circle (2) ;
\draw [line width=0.5pt][dashed] [->] (xibar)-- (xjbar) ;
\end{tikzpicture}
\\
(a)  & (b)  
\end{tabular}
\end{center}
\caption{\label{fig:4} Vision cones for each particle ($\mC_i$ in blue and $\mC_j$ in pink) for the two considered configurations:  (a) cooperative interactions, (b) non cooperative interactions}
\end{figure}
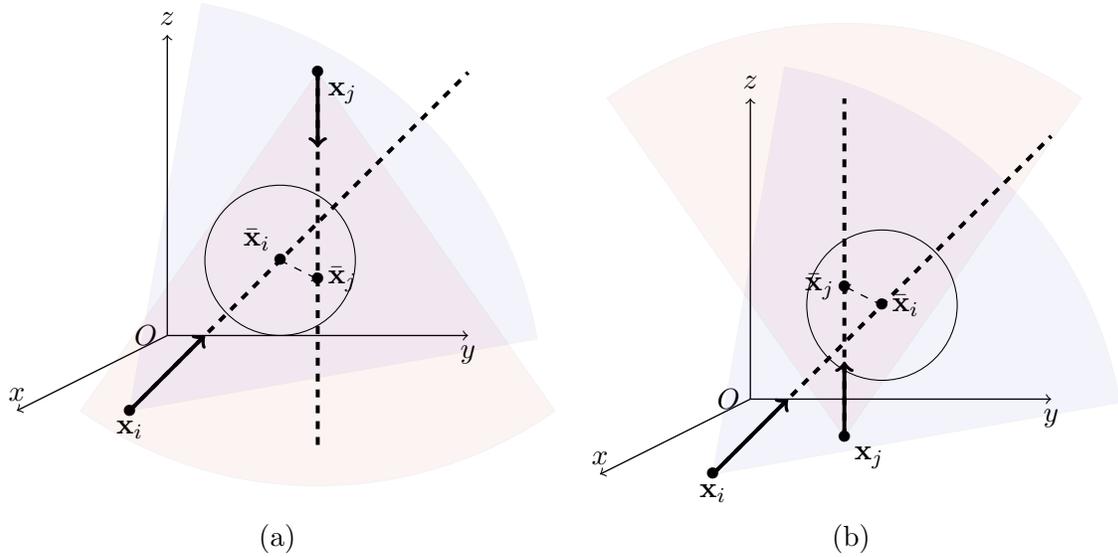

We are now ready to make the link between the time derivative of the relative
polar and bearing angles and the collision avoidance process.  Assume that $t=t^0$ and consider two particles $(i,j)\in
\{1,\ldots,,N\}^2$, such that $j\in\mI_i(t^0)$.  In the present situation the
two interaction points $\bar \bx_i$ and $\bar \bx_j$ are relatively
close  and the  particles need to rotate to avoid a collision, so that,
the minimal distance  $D_{ij}$, given in
(\ref{Distance2b}), will increase. Then, we write $\bv_j-\bv_i$ in the orthonormal frame
$\{\bk_{ij}, \be_{\beta_{ij}}, \be_{\alpha_{ij}}\}$ and using the
results of Lemma \ref{lmm:1}, it yields that 
$$
|\bv_j-\bv_i|^2 \,=\,  d_{ij}^2 \,\left(\,  |\dot\beta_{ij}|^2 \,+\, |\sin(\beta_{ij})\,\dot
  \alpha_{ij}|^2\,\right) \,+\, \langle \bv_j-\bv_i,\bk_{ij}\rangle^2,
$$
hence we have 
\begin{eqnarray*}
D_{ij}^2 &=& \left(\frac{d_{ij}}{|\bv_j-
  \bv_i|} \right)^2  \,\left(\, |\bv_j - \bv_i|^2 \,- \,\langle \bv_j-\bv_i,\bk_{ij}\rangle^2\,\right),
\\
 &=& \left(\frac{d_{ij}^2}{|\bv_j- \bv_i|} \right)^2 \,  \mA_{ij}^2, 
\end{eqnarray*}
where $\mA_{ij}^2$ is defined as
\begin{equation}
\mA_{ij}^2(t) \,\;:=\,\, \dot\beta_{ij}^2(t)\,+\,|\sin(\beta_{ij})\,\dot
  \alpha_{ij}(t)|^2.
\label{Aij}
\end{equation}
This results indicates that the
collision is very likely when $\mA_{ij}^2(t^0)$ is small. Therefore, to increase the minimal distance $D_{ij}$ we need to
increase the magnitude of the time derivative of the relative bearing and polar
angle $(\sin(\beta_{ij})\,\dot\alpha_{ij},\dot\beta_{ij})$ given in Lemma
\ref{lmm:1}. Thus, the proposed control scheme
is based on gyroscopic forces but adapted to the constraints due to the perception region. On the one
hand we consider the situation where the two particles see each other,
then they cooperate to avoid collisions (cooperative interaction
represented  in Figure \ref{fig:4}-(a)). On the other hand,
we describe the interaction of one particle with an obstacle or
another particle which do not deviate from its trajectory
(non-cooperative interaction represented in Figure \ref{fig:4}-(b)).

\subsubsection{Cooperative interactions}

At time $t=t^0$, both particles are such that
$(i,j)\in\mK_j(t^0)\times \mK_i(t^0)$ as it is shown in Figure \ref{fig:4}-(a). Then the two particles will
rotate in order to avoid to collide  along a rotation axis defined by
a  vector field  $\br_{ij}$ which has to be determined such that 
$$\left\{\begin{array}{l}
\ds\frac{d\bv_i}{dt} \,=\, \omega_{ij}\,\bv_i \wedge \br_{ij}, \\ \, \\ 
     \ds      \frac{d\bv_j}{dt} \,=\, \omega_{ij}\,\bv_j \wedge \br_{ij},
\end{array}\right.
$$ 
where $\omega_{ij}>0$ defines the rotation frequency. To this aim, we write the vector $\br_{ij}$ in the
basis $\{\bk_{ij}, \be_{\beta_{ij}}, \be_{\alpha_{ij}}\}$ as
\begin{equation}
\label{rij}
\br_{ij} \,\,=\,\, r^1_{ij}\,\bk_{ij} \,+\, r^2_{ij}\,\be_{\beta_{ij}}\,+\, r^3_{ij}\,\be_{\alpha_{ij}}
\end{equation}
and determine the values of $(r^1_{ij},r^2_{ij},r^3_{ij})$ in order to increase the
magnitude of $(\sin(\beta_{ij})\,\dot\alpha_{ij},\dot\beta_{ij})$. In
the next lemma, we determine the rotation
axis and show how to increase the time derivative of the bearing and
polar angles and therefore, thus decreasing the likeliness of the
collision. We follow the same strategy as \cite{Degond-2}
for two dimensional problems. 

\begin{lemma}
Assume that two particles $(i,j)\in\{1,\ldots,N\}^2$ are such that
$(i,j)\in\mK_j(t^0)\times \mK_i(t^0)$ and consider the time derivative of the relative bearing and polar angles
$(\sin(\beta_{ij})\,\dot\alpha_{ij}, \dot\beta_{ij})$ given in (\ref{eq:bearing}) and
the rotational axis $\br_{ij}$ given by (\ref{rij}) is such that $r^1_{ij}\in\RR$,
\begin{equation}
\label{toto2}
-\frac{\omega_{ij}\, r^3_{ij}}{\dot\beta_{ij}} \,\leq \,2\,\quad{\rm and}\quad \frac{\omega_{ij}\, r^2_{ij}}{\sin(\beta_{ij})\,\dot\alpha_{ij}} \,\leq\, 2.
\end{equation}
Then,  $(\sin(\beta_{ij})\,\dot\alpha_{ij}, \dot\beta_{ij})$  is solution to the following system
\begin{equation}
\label{confit2}
\left\{
\begin{array}{l}
\ds\frac{d\dot\beta_{ij}}{dt} \,\,=\,\,  \left( \omega_{ij} \,r^1_{ij} \,+\, \cos(\beta_{ij})\,\dot\alpha_{ij}\right)
  \,\sin(\beta_{ij})\,\dot\alpha_{ij} \,+\, \lambda^3_{ij}\,\dot\beta_{ij},
\\
\,
\\
\ds\frac{d}{dt}\left( \sin(\beta_{ij})\,\dot\alpha_{ij}\right) \,=\,
  -\left(\omega_{ij} \,r^1_{ij}\,+\, \cos(\beta_{ij})\, \dot\alpha_{ij}\right) \,\dot\beta_{ij}\, \,+\, \lambda^2_{ij}\,\sin(\beta_{ij})\,\dot\alpha_{ij},
\end{array}\right.
\end{equation}
with
\begin{equation}
\label{confit3}
\left\{
\begin{array}{l}
\ds\lambda^3_{ij} := \left(2 +
  \frac{\omega_{ij} \,r^3_{ij}}{\dot\beta_{ij}}\right)\,\left(\frac{|\bv_j -\bv_i|}{d_{ij}}\right)^2\,\tau_{ij} \in\RR^+,
\\
\,
\\
\ds\lambda^2_{ij} := \left(2-\frac{\omega_{ij} \,r^2_{ij}}{\sin(\beta_{ij})\,\dot\alpha_{ij}}\right)\,\left(\frac{|\bv_j
      -\bv_i|}{d_{ij}}\right)^2\,\tau_{ij} \in\RR^+.
\end{array}\right.
\end{equation}
Furthermore, $\mA_{ij}^2$ given in (\ref{Aij})  satisfies for $\gamma_{ij}=\min(\lambda^2_{ij},\lambda^3_{ij})$,
\begin{equation}
\label{res:Aij}
\frac{d\mA_{ij}^2}{dt} \,\geq \,\frac{\gamma_{ij}}{2}\, \mA_{ij}^2.
\end{equation}
\label{lmm:2}
\end{lemma}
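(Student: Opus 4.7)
The strategy is to differentiate in time the identities of Lemma \ref{lmm:1} and insert the rotation law $\frac{d}{dt}(\bv_j-\bv_i) = \omega_{ij}(\bv_j-\bv_i)\wedge\br_{ij}$ induced by the control, then assemble the evolution of $\mA_{ij}^2$.

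The preliminary ingredients I would compute are $\dot d_{ij} = \langle\bv_j-\bv_i,\bk_{ij}\rangle$ (from differentiating $d_{ij}^2 = |\bx_j-\bx_i|^2$), the formula $\dot\bk_{ij} = \dot\beta_{ij}\be_{\beta_{ij}} + \sin(\beta_{ij})\dot\alpha_{ij}\be_{\alpha_{ij}}$ established in the proof of Lemma \ref{lmm:1}, and the derivatives of the local basis obtained by differentiating (\ref{def:e}) at the instantaneous orientation of the $i$-frame:
$$
\dot\be_{\beta_{ij}} \,=\, -\dot\beta_{ij}\,\bk_{ij} + \cos(\beta_{ij})\,\dot\alpha_{ij}\,\be_{\alpha_{ij}},\qquad
\dot\be_{\alpha_{ij}} \,=\, -\sin(\beta_{ij})\,\dot\alpha_{ij}\,\bk_{ij} - \cos(\beta_{ij})\,\dot\alpha_{ij}\,\be_{\beta_{ij}}.
$$

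Taking the time derivative of $d_{ij}\dot\beta_{ij} = \langle\bv_j-\bv_i,\be_{\beta_{ij}}\rangle$ and $d_{ij}\sin(\beta_{ij})\dot\alpha_{ij} = \langle\bv_j-\bv_i,\be_{\alpha_{ij}}\rangle$, I would expand $(\bv_j-\bv_i)\wedge\br_{ij}$ in the orthonormal basis $\{\bk_{ij},\be_{\beta_{ij}},\be_{\alpha_{ij}}\}$ using (\ref{rij}) and the right-handed triple-product rules, obtaining explicit expressions for $\langle(\bv_j-\bv_i)\wedge\br_{ij},\be_{\beta_{ij}}\rangle$ and $\langle(\bv_j-\bv_i)\wedge\br_{ij},\be_{\alpha_{ij}}\rangle$ in terms of $r^1_{ij},r^2_{ij},r^3_{ij}$ and the components $(\langle\bv_j-\bv_i,\bk_{ij}\rangle, d_{ij}\dot\beta_{ij}, d_{ij}\sin(\beta_{ij})\dot\alpha_{ij})$ of $\bv_j-\bv_i$. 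The longitudinal component is then rewritten via (\ref{TimeToInteraction}) as
$$
\frac{\langle\bv_j-\bv_i,\bk_{ij}\rangle}{d_{ij}} \,=\, -\,\frac{|\bv_j-\bv_i|^2}{d_{ij}^2}\,\tau_{ij},
$$
which produces the coefficients $\lambda^{2,3}_{ij}$ of (\ref{confit3}) and the whole system (\ref{confit2}). The fact that $j\in\mK_i(t^0)\subset\mI_i(t^0)$ forces $\tau_{ij}>0$, so that the constraint (\ref{toto2}) immediately yields $\lambda^{2,3}_{ij}\geq 0$.

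Finally, to obtain (\ref{res:Aij}), I compute $\frac{d\mA_{ij}^2}{dt} = 2\dot\beta_{ij}\,\ddot\beta_{ij} + 2\sin(\beta_{ij})\dot\alpha_{ij}\,\frac{d}{dt}(\sin(\beta_{ij})\dot\alpha_{ij})$ and substitute (\ref{confit2}). The cross-coupling terms of the form $\pm(\omega_{ij} r^1_{ij} + \cos(\beta_{ij})\dot\alpha_{ij})\,\dot\beta_{ij}\,\sin(\beta_{ij})\dot\alpha_{ij}$ enter the two equations with opposite signs and cancel exactly (this is the gyroscopic antisymmetry inherent to rotational control), leaving $\frac{d\mA_{ij}^2}{dt} = 2\lambda^3_{ij}\dot\beta_{ij}^2 + 2\lambda^2_{ij}(\sin(\beta_{ij})\dot\alpha_{ij})^2 \geq 2\gamma_{ij}\,\mA_{ij}^2$, which a fortiori implies (\ref{res:Aij}). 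The only delicate point is the consistent bookkeeping of the moving-frame derivatives $\dot\be_{\beta_{ij}},\dot\be_{\alpha_{ij}}$ and of the cross-product decomposition of $(\bv_j-\bv_i)\wedge\br_{ij}$; once these are correctly set up, the gyroscopic cancellation at the last step makes the final inequality transparent.
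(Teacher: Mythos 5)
Your proposal is correct and follows essentially the same route as the paper's proof: differentiate the identities of Lemma \ref{lmm:1} using the frame derivatives $\dot\be_{\beta_{ij}},\dot\be_{\alpha_{ij}}$ and $\dot d_{ij}=\langle\bv_j-\bv_i,\bk_{ij}\rangle$, expand $(\bv_j-\bv_i)\wedge\br_{ij}$ in the basis $\{\bk_{ij},\be_{\beta_{ij}},\be_{\alpha_{ij}}\}$, rewrite the longitudinal component through $\tau_{ij}$ to produce $\lambda^{2,3}_{ij}$, and use the antisymmetric cancellation of the cross terms to conclude. As a minor aside, your bookkeeping yields $\frac{d\mA_{ij}^2}{dt}=2\lambda^3_{ij}\dot\beta_{ij}^2+2\lambda^2_{ij}(\sin(\beta_{ij})\dot\alpha_{ij})^2\geq 2\gamma_{ij}\mA_{ij}^2$, which is sharper than (\ref{res:Aij}) and a fortiori implies it.
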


\begin{proof} 
Let us consider the expression of $(\sin(\beta_{ij})\,\dot\alpha_{ij}, \dot\beta_{ij})$
given by (\ref{eq:bearing}). Then we compute the time derivative of
both quantities 
\begin{eqnarray*}
\frac{d\dot \beta_{ij}}{dt} &=&  \frac{1}{d_{ij}} \,\left( \langle \dot\bv_j - \dot\bv_i \,,\,
    \be_{\beta_{ij}} \rangle \,+\,  \langle \bv_j - \bv_i \,,\,
    \dot\be_{\beta_{ij}} \rangle \right)  
\\
&-& \frac{1}{d_{ij}^3}
  \langle \bv_j - \bv_i \,,\,  \bx_j - \bx_i \rangle \, \langle \bv_j - \bv_i \,,\,  \be_{\beta_{ij}} \rangle 
\end{eqnarray*}
and 
\begin{eqnarray*}
\frac{d}{dt}\left( \sin(\beta_{ij})\,\dot\alpha_{ij}\right) &=&  \frac{1}{d_{ij}} \,\left( \langle \dot\bv_j - \dot\bv_i ,
    \be_{\alpha_{ij}} \rangle \,+\,  \langle \bv_j - \bv_i ,
    \dot\be_{\alpha_{ij}} \rangle \right)  
\\
&-& \frac{1}{d_{ij}^3}  \langle \bv_j - \bv_i \,,\,  \bx_j - \bx_i \rangle\, \langle \bv_j - \bv_i \,,\,  \be_{\alpha_{ij}} \rangle . 
\end{eqnarray*}
Now we observe that 
$$
\left\{\begin{array}{l}
\dot\be_{\alpha_{ij}} \,=\, -\dot\alpha_{ij} \,\left[ \cos(\beta_{ij})  \,\be_{\beta_{ij}} \,+\, \sin(\beta_{ij})\,\bk_{ij} \right],  
\\
\,
\\
\dot\be_{\beta_{ij}} \,=\, +\dot\alpha_{ij} \, \cos(\beta_{ij}) \,  \be_{\alpha_{ij}} \,-\, \dot\beta_{ij}\,\bk_{ij},   
\end{array}\right.
$$   
hence using the definition of the unit vector $\bk_{ij}$ in (\ref{eq:defk}) and  the definition of $\tau_{ij}$ in (\ref{TimeToInteraction}), it
yields for the time derivative of the relative polar angle $\dot\beta_{ij}$,
$$
\frac{d\dot\beta_{ij}}{dt} \,=\,  \omega_{ij} \,\frac{\langle (\bv_j - \bv_i)\wedge \br_{ij} \,,\,\be_{\beta_{ij}} \rangle}{d_{ij}}   \,+\,  \cos(\beta_{ij}) \,\sin(\beta_{ij})\,\dot\alpha_{ij}^2  \,+\, 2\,\left(\frac{|\bv_j -\bv_i|}{d_{ij}}\right)^2\,\tau_{ij}\,\dot\beta_{ij},
$$
then for the time derivative of $\sin(\beta_{ij})\,\dot\alpha_{ij}$,
$$
\frac{d}{dt}\left( \sin(\beta_{ij})\,\dot\alpha_{ij}\right) \,=\,
\omega_{ij} \,\frac{\langle (\bv_j - \bv_i)\wedge \br_{ij} \,,\,    \be_{\alpha_{ij}} \rangle}{|\bx_j -\bx_i|} \,-\, \cos(\beta_{ij}) \,\dot\beta_{ij}\,\dot\alpha_{ij} \,+\, 2\,\left(\frac{|\bv_j
      -\bv_i|}{|\bx_j -\bx_i|}\right)^2\,\tau_{ij} \,\sin(\beta_{ij})\,\dot\alpha_{ij}.
$$
Therefore,  from the definition of $\br_{ij}$ in (\ref{rij}) and using
that $\langle \ba\,,\,\bb\wedge \bc \rangle = \langle
\bb\,,\,\bc\wedge \ba \rangle$, we get
$$
\left\{
\begin{array}{l}
\ds\langle (\bv_j - \bv_i)\wedge \br_{ij} \,,\, \be_{\beta_{ij}}
  \rangle \,=\, +r^1_{ij}  \,\langle \bv_j - \bv_i\,,\, \be_{\alpha_{ij}}\rangle
  \,-\, r^3_{ij} \,\langle \bv_j - \bv_i\,,\, \bk_{ij}\rangle,
\\
\,
\\
 \ds\langle (\bv_j - \bv_i)\wedge \br_{ij} \,,\, \be_{\alpha_{ij}}
  \rangle \,=\, -r^1_{ij}  \,\langle \bv_j - \bv_i\,,\, \be_{\beta_{ij}}\rangle
  \,+\, r^2_{ij} \, \langle \bv_j - \bv_i\,,\, \bk_{ij}\rangle,
\end{array}\right.
$$
which gives using (\ref{eq:bearing}), the  system of equations given in
(\ref{confit2}) with (\ref{confit3}).

From the assumption (\ref{toto2}), we get the nonnegativity of
the last coefficients. Therefore, multiplying the first equation of
(\ref{confit2}) by $\dot\beta_{ij}$ and the second one by
$\sin(\beta_{ij})\,\dot\alpha_{ij}$, it gives that
$$
2\,\frac{d\mA_{ij}^2}{dt} \,=\, \lambda^3_{ij} \,|\dot\beta_{ij}|^2 \,+\,
\lambda^2_{ij}\, \left|\sin(\beta_{ij})\,\dot\alpha_{ij}\right|^2\geq 0.
$$
Hence, from the nonnegativity of  $\lambda^2_{ij}$   and $\lambda^3_{ij}
$, the result (\ref{res:Aij}) follows. 
\end{proof}

\begin{remark}
Notice that  (\ref{res:Aij}) obtained in Lemma \ref{lmm:2} ensures
that the magnitude of $\mA_{ij}^2(t)$ will growth since $(\lambda_{ij}^2, \lambda_{ij}^3)$  given in
(\ref{confit3}) are  nonnegative. Furthermore, when $\lambda_{ij}^2$
and $\lambda_{ij}^3$  are bounded from below, $\mA_{ij}^2(t)$ will fast 
grow exponentially in
time.
\end{remark}

Applying Lemma \ref{lmm:2}, we observe that we can choose $\br_{ij}$
orthogonal to the unit vector $\bk_{ij}$ since this direction does not
have any effect on the variation  of $\mA_{ij}^2$.  We give a simple
choice for $\br_{ij}$.

\begin{example}
For any frequency $\omega_{ij}>0$, we take
$$
\br_{ij} \,:=\, -\frac{(\bv_j-\bv_i)\wedge \bk_{ij}}{d_{ij}}
$$
and after an easy computation, it gives 
$$
\left\{
\begin{array}{l}
\ds r^2_{ij}=\langle \br_{ij}\,,\, \be_{\beta_{ij}} \rangle \,=\, -\sin(\beta_{ij})\,\dot\alpha_{ij},
\\
\,
\\
\ds r^3_{ij}=\langle \br_{ij}\,,\, \be_{\alpha_{ij}} \rangle \,=\, \dot\beta_{ij},
\end{array}\right.
$$
hence, we have
$$
\left\{
\begin{array}{l}
\ds\frac{d\dot\beta_{ij}}{dt} \,=\,   \left( 2+\omega_{ij}\right)\,\left(\frac{|\bv_i -\bv_j|}{d_{ij}}\right)^2\,\tau_{ij}\,\dot\beta_{ij}\,+\,  \cos(\beta_{ij}) \,\sin(\beta_{ij})\,\dot\alpha_{ij}^2,
\\
\,
\\
\ds\frac{d}{dt}\left( \sin(\beta_{ij})\,\dot\alpha_{ij}\right) \,=\,
  \left( 2+ \omega_{ij}\right)\,\left(\frac{|\bv_i
      -\bv_j|}{d_{ij}}\right)^2\,\tau_{ij} \,\sin(\beta_{ij})\,\dot\alpha_{ij}\,-\, \cos(\beta_{ij}) \,\dot\beta_{ij}\,\dot\alpha_{ij}.
\end{array}\right.
$$
Then we have (\ref{res:Aij}) with 
$$
\gamma_{ij}\,:=\,\left( 2+\omega_{ij}\right)\,\left(\frac{|\bv_i -\bv_j|}{d_{ij}}\right)^2\,\tau_{ij}>0.
$$
\label{ex:1}
\end{example}
\subsubsection{Non-cooperative interactions}
\label{sec:nc-int}

Consider at $t=t^0$ two particles $(i,j)\in\{1,\ldots,N\}^2$ such that
$j\in\mK_i(t^0)$ but $i\notin \mK_j(t^0)$ as it is shown in Figure \ref{fig:4}-(b). Then only the particle $i$ will
rotate in order to avoid collision along a rotation axis defined by
a  vector field  $\br_{ij}$ which has to be determined such that 
$$\left\{\begin{array}{l}
\ds\frac{d\bv_i}{dt} \,=\, \tilde{\omega}_{ij}\,\bv_i \wedge \br_{ij}, \\ \, \\ 
     \ds      \frac{d\bv_j}{dt} \,=\, 0,
\end{array}\right.
$$ 
where $\tilde\omega_{ij} \in \RR$. Therefore we apply the same strategy as the one presented below to determine the condition for
which the time derivative of the polar angle
$\dot\beta_{ij}$ and $\sin(\beta_{ij})\,\dot\alpha_{ij}$ will
increase. Hence we prove the following result.

\begin{lemma}
Assume that two particles $(i,j)\in\{1,\ldots,N\}^2$ are such that
$j\in\mK_i(t^0)$ and $i\notin \mK_j(t^0)$ and consider the time derivative of the relative bearing and polar angles
$(\sin(\beta_{ij})\,\dot\alpha_{ij}, \dot\beta_{ij})$ given in (\ref{eq:bearing}) and
the rotational axis $\br_{ij}$ given by (\ref{rij}) is such that $r^1_{ij}=0$,
\begin{equation}
\label{toto3}
\tilde\omega_{ij}\,\frac{\cos(\alpha_{ij})\, r^3_{ij}}{\dot\beta_{ij}}\geq 0 \quad{\rm and}\quad
\tilde\omega_{ij}\,\frac{\cos(\alpha_{ij})\, r^2_{ij}}{\dot\alpha_{ij}} \leq 0.
\end{equation}
Then,  $(\sin(\beta_{ij})\,\dot\alpha_{ij}, \dot\beta_{ij})$  is solution to the following system
\begin{equation}
\label{c3}
\left\{
\begin{array}{l}
\ds\frac{d\dot\beta_{ij}}{dt} \,\,=\,\,  \cos(\beta_{ij})\,\sin(\beta_{ij})\,\dot\alpha_{ij}^2 \,+\, \eta^3_{ij}\,\dot\beta_{ij},
\\
\,
\\
\ds\frac{d}{dt}\left( \sin(\beta_{ij})\,\dot\alpha_{ij}\right) \,=\,
  - \cos(\beta_{ij})\, \dot\alpha_{ij}\,\dot\beta_{ij}\, \,+\, \eta^2_{ij}\,\sin(\beta_{ij})\,\dot\alpha_{ij},
\end{array}\right.
\end{equation}
with
$$
\left\{
\begin{array}{l}
\ds\eta^3_{ij} := \left(2\,\tau_{ij}\,\left(\frac{|\bv_j
      -\bv_i|}{d_{ij}}\right)^2\,+\,
  \tilde\omega_{ij}\,|\bv_i|\,\sin(\beta_{ij})\,\frac{\cos(\alpha_{ij}) \,r^3_{ij}}{\dot\beta_{ij}}\,\right)\,\in\,\RR^+,
\\
\,
\\
\ds\eta^2_{ij} := \left(2\,\tau_{ij}\,\left(\frac{|\bv_j
      -\bv_i|}{d_{ij}}\right)^2\,-\,
  \tilde\omega_{ij}\, |\bv_i|\,\frac{\cos(\alpha_{ij}) \, r^2_{ij}}{\dot\alpha_{ij}}\,\right)\,\in\,\RR^+.
\end{array}\right.
$$
Furthermore, $\mA_{ij}^2$ given in (\ref{Aij})  satisfies for $\gamma_{ij}=\min(\eta^2_{ij},\eta^3_{ij})$,
\begin{equation}
\label{res2:Aij}
\frac{d\mA_{ij}^2}{dt} \,\geq \,\frac{\gamma_{ij}}{2}\, \mA_{ij}^2.
\end{equation}
\label{lmm:3}
\end{lemma}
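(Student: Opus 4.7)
The plan is to follow the proof of Lemma \ref{lmm:2} mutatis mutandis, the only modification being that now $\dot\bv_j = 0$ and $\dot\bv_i = \tilde\omega_{ij}\,\bv_i\wedge \br_{ij}$, so that
$$
\dot\bv_j - \dot\bv_i \,=\, -\tilde\omega_{ij}\,\bv_i\wedge \br_{ij}
$$
replaces the previous $\omega_{ij}(\bv_j-\bv_i)\wedge \br_{ij}$ in every intermediate formula.

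First I would differentiate in time the two identities of Lemma \ref{lmm:1}. All terms that depend only on $\bv_j - \bv_i$ and on the time derivative of the moving frame $(\bk_{ij},\be_{\beta_{ij}},\be_{\alpha_{ij}})$ are unchanged with respect to the cooperative case. They produce exactly the cross term $\cos(\beta_{ij})\sin(\beta_{ij})\dot\alpha_{ij}^2$ (resp. $-\cos(\beta_{ij})\dot\alpha_{ij}\dot\beta_{ij}$) together with the diffusion-like term $2\,\tau_{ij}(|\bv_j-\bv_i|/d_{ij})^2\,\dot\beta_{ij}$ (resp. its $\sin(\beta_{ij})\dot\alpha_{ij}$ analogue), via (\ref{TimeToInteraction}) and (\ref{eq:defk}). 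These contributions account for the first summand in $\eta^3_{ij}$ and $\eta^2_{ij}$.

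Second, the genuinely new ingredient is the computation of $\langle \bv_i\wedge \br_{ij},\be_{\beta_{ij}}\rangle$ and $\langle \bv_i\wedge \br_{ij},\be_{\alpha_{ij}}\rangle$. Writing $\bv_i = |\bv_i|\,\be_{\rho_i}$ and using (\ref{def:e}) together with the spherical decomposition of $\bk_{ij}$ recalled in the proof of Lemma \ref{lmm:1}, I can express $\bv_i$ in the orthonormal basis $(\bk_{ij}, \be_{\beta_{ij}}, \be_{\alpha_{ij}})$; in particular its $\bk_{ij}$-component is $|\bv_i|\sin(\beta_{ij})\cos(\alpha_{ij})$. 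Since $r^1_{ij} = 0$, the projections of $\bv_i \wedge \br_{ij}$ on $\be_{\beta_{ij}}$ and $\be_{\alpha_{ij}}$ are proportional to this $\bk_{ij}$-component multiplied respectively by $-r^3_{ij}$ and $r^2_{ij}$. Recasting the corresponding $\tilde\omega_{ij}$-terms in the forms $\bigl(\tilde\omega_{ij}|\bv_i|\sin(\beta_{ij})\cos(\alpha_{ij})\,r^3_{ij}/\dot\beta_{ij}\bigr)\,\dot\beta_{ij}$ and its analogue in the $\dot\alpha_{ij}$-equation, then invoking the sign assumptions (\ref{toto3}), yields (\ref{c3}) together with $\eta^2_{ij}, \eta^3_{ij}\geq 0$.

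Finally, to obtain (\ref{res2:Aij}) I would multiply the first equation of (\ref{c3}) by $\dot\beta_{ij}$ and the second by $\sin(\beta_{ij})\dot\alpha_{ij}$, and sum. The two $\cos(\beta_{ij})$ cross terms cancel, leaving
$$
2\,\frac{d\mA_{ij}^2}{dt} \,=\, \eta^3_{ij}\,\dot\beta_{ij}^2 \,+\, \eta^2_{ij}\,|\sin(\beta_{ij})\dot\alpha_{ij}|^2 \,\geq\, \gamma_{ij}\,\mA_{ij}^2,
$$
by definition (\ref{Aij}) and the choice $\gamma_{ij}=\min(\eta^2_{ij},\eta^3_{ij})$. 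The only real difficulty is the bookkeeping in step two: reading off the correct signs of $\langle \bv_i\wedge \br_{ij}, \be_{\beta_{ij}}\rangle$ and $\langle \bv_i\wedge \br_{ij}, \be_{\alpha_{ij}}\rangle$ in the rotating basis, and checking that the sign constraints (\ref{toto3})—which are tailored precisely for this purpose—deliver nonnegativity of the coefficients after the division by $\dot\beta_{ij}$ and $\sin(\beta_{ij})\dot\alpha_{ij}$ needed to recast the evolution in the Lyapunov form (\ref{c3}).
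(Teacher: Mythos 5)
Your proposal is correct and follows essentially the same route as the paper's proof: repeat the computation of Lemma \ref{lmm:2} with $\dot\bv_j-\dot\bv_i=-\tilde\omega_{ij}\,\bv_i\wedge\br_{ij}$, evaluate the projections $\langle \bv_i\wedge\br_{ij},\be_{\beta_{ij}}\rangle=-r^3_{ij}\,|\bv_i|\sin(\beta_{ij})\cos(\alpha_{ij})$ and $\langle \bv_i\wedge\br_{ij},\be_{\alpha_{ij}}\rangle=+r^2_{ij}\,|\bv_i|\sin(\beta_{ij})\cos(\alpha_{ij})$ using $r^1_{ij}=0$ and $\langle\bv_i,\bk_{ij}\rangle=|\bv_i|\sin(\beta_{ij})\cos(\alpha_{ij})$, invoke (\ref{toto3}) for the nonnegativity of $\eta^2_{ij},\eta^3_{ij}$, and conclude by the same Gronwall-type combination. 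The sign bookkeeping you flag as the delicate point is handled identically in the paper, so there is nothing to add.
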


\begin{proof} 
We proceed as in the proof of Lemma \ref{lmm:2}, hence we get
$$
\frac{d\dot\beta_{ij}}{dt} \,=\,  -\tilde\omega_{ij}\,\frac{\langle \bv_i\wedge \br_{ij} \,,\,\be_{\beta_{ij}} \rangle}{d_{ij}}   \,+\,  \cos(\beta_{ij}) \,\sin(\beta_{ij})\,\dot\alpha_{ij}^2  \,+\, 2\,\left(\frac{|\bv_j -\bv_i|}{d_{ij}}\right)^2\,\tau_{ij}\,\dot\beta_{ij},
$$
and for the time derivative of $\sin(\beta_{ij})\,\dot\alpha_{ij}$,
$$
\frac{d}{dt}\left( \sin(\beta_{ij})\,\dot\alpha_{ij}\right) \,=\,
-\tilde\omega_{ij}\,\frac{\langle \bv_i\wedge \br_{ij} \,,\,    \be_{\alpha_{ij}} \rangle}{d_{ij}} \,-\, \cos(\beta_{ij}) \,\dot\beta_{ij}\,\dot\alpha_{ij} \,+\, 2\,\left(\frac{|\bv_j
      -\bv_i|}{d_{ij}}\right)^2\,\tau_{ij} \,\sin(\beta_{ij})\,\dot\alpha_{ij}.
$$
Furthermore, from the expression of $\be_{\beta_{ij}}$ and
$\be_{\alpha_{ij}}$ in (\ref{def:e}) and choosing $r^1_{ij}=0$, we get that
$$
\left\{
\begin{array}{l}
\ds\langle \bv_i\wedge \br_{ij} \,,\, \be_{\beta_{ij}}
  \rangle \,= \,-\, r^3_{ij} \,\langle \bv_i\,,\, \bk_{ij}\rangle
\;=\,-\, |\bv_i|\, \cos(\alpha_{ij}) \, \sin(\beta_{ij})\, r^3_{ij},
\\
\,
\\
 \ds\langle \bv_i\wedge \br_{ij} \,,\, \be_{\alpha_{ij}}
  \rangle \,=\, +\,r^2_{ij} \, \langle \bv_i\,,\, \bk_{ij}\rangle
\,=\,     +\,|\bv_i| \, \cos(\alpha_{ij}) \, \sin(\beta_{ij}) \, r^2_{ij}.
\end{array}\right.
$$
It gives the following system of equations 
 $$
\left\{
\begin{array}{l}
\ds\frac{d\dot\beta_{ij}}{dt} \,=\,  
                               \cos(\beta_{ij})\,\sin(\beta_{ij})\,\dot\alpha_{ij}^2 \,+\, \left(2 \,\tau_{ij}
  \,\left(\frac{|\bv_j -\bv_i|}{|\bx_j -\bx_i|}\right)^2 \,+\, \tilde\omega_{ij}\,|\bv_i|\,
  \sin(\beta_{ij})\,\frac{\cos(\alpha_{ij}) \,r^3_{ij}}{\dot\beta_{ij}}\right)\,\,\dot\beta_{ij},
\\
\,
\\
\ds\frac{d}{dt}\left( \sin(\beta_{ij})\,\dot\alpha_{ij}\right) \,=\,
                                                              
                                                              \,-\,
                                                              \cos(\beta_{ij})\,
                                                              \dot\alpha_{ij}\,\dot\beta_{ij}\,+\, \left(2\,\tau_{ij}\,\left(\frac{|\bv_j
      -\bv_i|}{|\bx_j -\bx_i|}\right)^2\,-\, \tilde\omega_{ij}\, |\bv_i|\,\frac{\cos(\alpha_{ij}) \, r^2_{ij}}{\dot\alpha_{ij}}\,\right) \, \sin(\beta_{ij})\dot\alpha_{ij}.
\end{array}\right.
$$
From the assumption (\ref{toto3}), we get the nonnegativity of
the last coefficients. Therefore, multiplying the first equation of
(\ref{c3}) by $\dot\beta_{ij}$ and the second one by
$\sin(\beta_{ij})\,\dot\alpha_{ij}$, it gives that
$$
2\,\frac{d \mA_{ij}^2}{dt}\,=\, \eta^3_{ij} \,|\dot\beta_{ij}|^2 \,+\,
\eta^2_{ij}\, \left|\sin(\beta_{ij})\,\dot\alpha_{ij}\right|^2\geq 0,
$$
hence (\ref{res2:Aij}) follows with $\gamma_{ij}=\min(\eta^2_{ij},\eta^3_{ij})$.
\end{proof}

Following Example \ref{ex:1}, we give a simple choice for $\br_{ij}$.
\begin{example}
For any frequency $\omega_{ij}>0$, we choose $\tilde\omega_{ij}=
\omega_{ij}\,\cos(\alpha_{ij})$ and  $\br_{ij}$ such that 
$$
\br_{ij} \,:=\, -\frac{(\bv_j-\bv_i)\wedge \bk_{ij}}{d_{ij}} 
$$
and as in  Example \ref{ex:1} we get
$$
\left\{
\begin{array}{l}
\ds r^2_{ij}=\langle \br_{ij}\,,\, \be_{\beta_{ij}} \rangle \,=\, - \sin(\beta_{ij})\,\dot\alpha_{ij},
\\
\,
\\
\ds r^3_{ij}=\langle \br_{ij}\,,\, \be_{\alpha_{ij}} \rangle \,=\, \dot\beta_{ij}.
\end{array}\right.
$$
Hence, from the choice of $\tilde\omega_{ij}$ and the latter
equalities, the assumption  (\ref{toto3}) is satisfied,  then we can apply Lemma \ref{lmm:3} and the particle
$i\in\{1,\ldots,N\}$ will deviate from $j\in\mK_i(t^0)$ whereas $j$
will continue its free motion. 
\label{ex:2}
\end{example}
\subsubsection{Collision avoidance model}

Finally, taking into account all the interactions between particles at
time $t=t^0$,
the force field applied for collision avoidance is given by the sum of
interactions as
\begin{equation}
\label{model:0}
\bF_i^{\rm self}(\bx_i, \bv_i)  \,\,=\,\, \frac{1}{N}
\sum_{j=1}^N  \omega_{ij}\, H_{ij}\,\,\mathds{1}_{\mK_i(t^0)}(j)\,\,  \bv_i \wedge\br_{ij}, 
\end{equation}
where $1_{\mK_i(t^0)}$ represents the characteristic function of the
set $\mK_i(t^0)$ defined in (\ref{Ki}),  $\omega_{ij}>0$ and  the
rotational axis $\br_{ij}$ is given by
\begin{equation}
\label{model:1}
\br_{ij} \,:=\, -\frac{(\bv_j-\bv_i)\wedge \bk_{ij}}{d_{ij}}\,,
\end{equation}
whereas the function $H_{ij}$ corresponds to either cooperative or
non-cooperative actions as explained above,
\begin{equation}
\label{model:1bis}
H_{ij} \,=\, \left\{
\begin{array}{ll}
1, & \textrm{if \,} i\in\mK_j(t^0),
\\
\cos(\alpha_{ij}), & \textrm{else.}
\end{array}\right.
\end{equation}

In the sequel the frequency $\omega_{ij}>0$ is chosen such that
$\omega_{ij}$ tends to zero when $\tau_{ij}\rightarrow +\infty$,
\begin{equation}
\label{model:2}
\omega_{ij} = \frac{8\,\pi}{|\br_{ij}|}\,e^{-\tau_{ij}}.
\end{equation}

\begin{remark}
Note that in some particular cases even when the set $\mK_i(t)$ is not
empty, the force term $\bF_i^{\rm self}(\bx_i, \bv_i)$ may be
zero. Indeed, it happens for instance 
\begin{itemize}
\item when $\bk_{ij}$ is colinear to
$\bv_j - \bv_i$, hence the vector $\br_{ij}=0$,
\item or when  the location of the set  particles in the vision cone of $i$ are
  perfectly symmetric with respect to the axis passing by $\bx_i$ of
  direction  $\bv_i$, hence $\bF_i^{\rm self}(\bx_i, \bv_i)$. 
\end{itemize}

 Therefore, in that case we choose it as
$$
\bF_i^{\rm self}(\bx_i, \bv_i) \,=\, \frac{\varepsilon}{N}
\sum_{j=1}^N  e^{-\tau_{ij}}\, H_{ij}\,\,\mathds{1}_{\mK_i(t^0)}(j)\,\,
\bv_i \wedge \be_z, 
$$ 
where $\varepsilon$ is chosen randomly and of order $10^{-6}$. In
practice this force term allows to break the symmetry and to remove
the degeneracy. 
\end{remark}

\subsection{Avoidance of obstacles and influence of the target}

Using the same strategy
as the one described below, obstacles $O\subset \RR^3$  are treated as particles,
where the particle interacts with the closest point belonging to the
intersection of the
obstacle and the vision cone of the particle $i$ at time $t^0$,
$$
\bx_O\, =\, \argmin_{\bx \in \partial O \cap \mK_i(t^0)} d\left(\bx_i(t^0),\bx\right),
$$ 
whereas $\bv_0\in\RR^3$ is the given velocity of the obstacle. 
Then the collision avoidance follows the same process as before except
that the obstacle does not deviate.
 
On the other hand, a force $-\nabla V(\bx_i) $ is applied to  steer particle $i$ to its
destination. The potential $V$ is the distance function 
$$
V(\bx_i) \,=\, | \bx_i - \bx_T |,
$$
where $\bx_T$ represents the location of the target, whereas a
friction term is added to control the speed of the particle $i\in\{1,\ldots,N\}$. Hence the particle  $i$ is directed by the sum of the
gradient of the potential field $-\nabla V(\bx_i)$ and the friction force in the following manner
$$
\bF_i^{\rm ext}(\bx_i, \bv_i)  \,\,=\,\,  -\nabla V(\bx_i) \,-\, \sigma \,\bv_i,
$$
where $\sigma>0$ represents the friction coefficient. This latter
force field induces a change of speed of particle $(\bx_i,\bv_i)$.
\subsection{Influence of the noise}
\label{subsec_noise}

Obviously, the motion of particles is not fully deterministic. When
some decisions need to be made in front of several alternatives, the
response of the subjects is subject-dependent. The simplest way to
model this inherent uncertainty consists in adding a Brownian motion
in velocity \cite{sde}
$$
d \bv_i \,=\,  \sqrt{2\,\nu} \circ dB^i_t, 
$$
where $\sqrt{2\,\nu}$ is the noise intensity and where $dB^i_t$ are
standard white noises in 3D, which are independent from one particle
to another one. The circle means that the stochastic differential
equation must be understood in the Stratonovich sense. The integration
of this stochastic differential equation generates a Brownian motion
\cite{Hsu_AMS02,sde}. This stochastic term adds up to the previous ones.

\subsection{Agent-based model for collision avoidance}
Finally from the requirements defined in the perception and decision
making phases, we get the following model constructed from the force
field $\bF_i^{\rm self}$ and $\bF_i^{\rm ext}$,
\begin{equation}
\left\{
\begin{aligned}
& \displaystyle{\frac{d \bx_i}{dt} \,=\, \bv_i,}\\
& \displaystyle{{d\bv_i} \,=\,  \left(\frac{1}{N}\,\sum_{j=1}^N \omega_{ij}\,H_{ij}\,\mathds{1}_{\mK_i(t)}(j)\,\,\bv_i\wedge \br_{ij} \,-\,\nabla V(\bx_i) \,-\,\sigma \,\bv_i \right) \,dt
  \,+\, \sqrt{2 \nu} \circ dB^i_t,}
\end{aligned}
\right.
\label{eq:dynamics}
\end{equation}
where $\br_{ij}$, $H_{ij}$ and $\omega_{ij}$ are given in (\ref{model:1})-(\ref{model:2}).

Note that in the two dimensional case, the interactions occur in the
horizontal plane and  the rotation axis is parallel to $Oz$, hence we recover the model proposed for
pedestrian in \cite{Degond-1, Degond-2} for binary
interactions. However, for multiple interactions the models differ
since in our approach,  the particle $i$ only rotates to avoid
collision among other particles without optimizing its trajectory to reach a
target as in \cite{Degond-1, Degond-2}. In (\ref{eq:dynamics}),
interacting particles are considered as obstacles where the intensity
of the force depends on the time to interaction $\tau_{ij}$ thanks to
the frequency $\omega_{ij}$ in (\ref{model:2}) : the probability to
deviate is small when $\tau_{ij}$ is high and is of order one when
$\tau_{ij}\rightarrow 0$.   This principle can be viewed as an
instantaneous reaction to avoid collision with particles around. 

\begin{proposition}
\label{prop:1}
Consider the solution $(\bx_i,\bv_i)_{1\leq i\leq N}$ to the
agent-based model (\ref{eq:dynamics}) without noise ($\nu=0$). Then
the energy given by
$$
\mE(t) \,:=\, \sum_{i=1}^N \left(\frac{|\bv_i |^2}{2} + V(\bx_i)\right),
$$
satisfies the following estimate 
$$
\frac{d\mE}{dt} \,\leq \, - \sum_{i=1}^N\sigma \,|\bv_i|^2.
$$
\end{proposition}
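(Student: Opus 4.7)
The plan is a direct energy computation. I would first differentiate $\mE$ with respect to time, using the chain rule and the equations of motion in (\ref{eq:dynamics}) with $\nu=0$:
\begin{equation*}
\frac{d\mE}{dt} \,=\, \sum_{i=1}^N \Big(\langle \bv_i, \dot\bv_i\rangle \,+\, \langle \nabla V(\bx_i), \dot\bx_i\rangle\Big) \,=\, \sum_{i=1}^N \Big(\langle \bv_i, \dot\bv_i\rangle \,+\, \langle \nabla V(\bx_i), \bv_i\rangle\Big).
\end{equation*}

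Next I would substitute the expression for $\dot\bv_i$ coming from (\ref{eq:dynamics}) and distribute the inner product over the three force contributions: the self-interaction term, the potential gradient, and the friction. The potential gradient contribution $-\langle \bv_i, \nabla V(\bx_i)\rangle$ cancels exactly with the term $\langle \nabla V(\bx_i), \bv_i\rangle$ coming from the time derivative of $V(\bx_i)$. The friction contribution gives $-\sigma|\bv_i|^2$ directly.

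The only term that requires a short justification is the self-interaction sum
\begin{equation*}
\frac{1}{N}\sum_{i,j=1}^N \omega_{ij}\,H_{ij}\,\mathds{1}_{\mK_i(t)}(j)\,\langle \bv_i, \bv_i \wedge \br_{ij}\rangle.
\end{equation*}
Here I would invoke the elementary identity $\langle \ba, \ba \wedge \bb\rangle = 0$ for all $\ba,\bb\in\RR^3$, which shows that each summand vanishes. This is precisely why the gyroscopic structure of the collision avoidance force was built into the model: by construction it only rotates velocities and never does work on the particles.

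Combining the three contributions yields in fact the equality $\frac{d\mE}{dt} = -\sum_i \sigma|\bv_i|^2$, from which the claimed inequality follows immediately. There is no real obstacle in the argument; the point of the proposition is to verify that the self-interaction vector field built in Section \ref{sec:2.2} is indeed orthogonal to $\bv_i$, so that $\mE$ is a Lyapunov functional modulo the friction dissipation.
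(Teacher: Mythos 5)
Your proposal is correct and is exactly the argument the paper intends: the paper's (very terse) proof says to multiply the velocity equation by $\bv_i$ and use the ``orthogonality property,'' which is precisely your observation that $\langle \bv_i, \bv_i\wedge\br_{ij}\rangle=0$, combined with the cancellation of the potential terms and the friction contribution. You simply spell out the details the paper omits, including the (accurate) remark that the estimate is in fact an equality when $\nu=0$.
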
 
\begin{proof}
Simply multiply the second equation of  (\ref{eq:dynamics}) by $\bv_i$
and integrate by part. By orthogonality property, we get the energy estimate.
\end{proof}
\section{Mean field kinetic model}
\label{sec:3}
We now consider the limit of a large number of particles $N
\rightarrow \infty$. We will give a formal proof of convergence when
there is no noise $\nu=0$ dealing with dynamical systems with
discontinuous coefficients and then with noise dealing with stochastic
differential systems.

\subsection{Mean field model without noise}
We first consider
the case without noise. For this derivation, we proceed like in \cite{spohn}. We
introduce the so-called empirical distribution $f^N (t, \bx,\bv)$
defined by
$$
f^N(t,\bx,\bv) \,:=\, \frac{1}{N} \,\sum_{i=1}^N \delta(\bx-\bx_i)\,  \delta(\bv-\bv_i),
$$
where $(\bx_i,\bv_i)_{1\leq i \leq N}$ is solution to the system of
ODEs (\ref{eq:dynamics}) with $\nu=0$. 

We introduce the cone $\mC(\bv)$ centered at the origin, with angle $\cos^{-1}(\kappa),\,\kappa\in[-1,1]$ about the direction $\bv\in\RR^3$
$$
\mC(\bv) \,:=\,\left\{\, \bz \in \RR^3, \quad \langle\bz,\bv\rangle\geq \kappa\, |\bz|\, |\bv|\,\right\}
$$
and for any $\bu\in\RR^3$, we set $\mI(\bu)$ as 
$$
\mI(\bu) \,:=\,\left\{\, \bz \in \RR^3, \quad \tau(\bz,\bu)>0, \,\,
  D(\bz,\bu) \leq R\,\right\},
$$
where the functions $D$ and $\tau$ correspond to 
\begin{equation}
\left\{
\begin{array}{l}
\displaystyle D (\bz,\bu) \,=\, \left(|\bz|^2 - \left(\bz .\frac{\bu}{|\bu|}\right)^2\right)^{1/2},
\\
\;
\\
\displaystyle \tau(\bz,\bu) \,=\, -\frac{\bz.\bu}{|\bu|^2}.
\end{array}\right.
\end{equation}
Finally, we define $\mK(\bv,\bw)\subset \RR^3$ as  
$$
\mK(\bv,\bw) \,=\,
\mI(\bw-\bv) \cap \mC(\bv).
$$ 

Then, it is an easy matter to
see that $f^N$ satisfies the following kinetic equation in the
distribution sense
\begin{equation}
\partial_t f^N + \bv\cdot \nabla_\bx f^N \,-\, \nabla_\bx V
  \cdot\nabla_\bv f^N  \,+\,  \nabla_\bv \cdot \left(\bv\wedge\Omega^N \,f^N\right) \,=\, \sigma\,\nabla_\bv \cdot
(\bv f^N), 
\label{fN}
\end{equation}
where $\Omega^N(t,\bx,\bv)$ is an interaction force defined by
\begin{equation}
\Omega^N(t,\bx,\bv) \,=\, -\frac{1}{N}\,\sum_{j=1}^N m(\bx_j-\bx,\bv,\bv_j) \,\mathds{1}_{\mK(\bv,\bv_j)}(\bx_j-\bx)  
  \,\,\br(\bx_j-\bx,\bv_j-\bv),
\label{OmegaN}
\end{equation}
with the rotation axis $\br(\bz,\bu)$ given by
$$
\br(\bz,\bu) \,=\, \frac{\bu\wedge \bz}{|\bz|^2},
$$
whereas the scalar function $m$ takes into account the frequency and the
cooperative and non-cooperative interaction,
$$
m(\bz,\bv,\bw) \,=\, \frac{8\pi}{|\br(\bz,\bw-\bv)|} \,
H_{\mC(\bw)}(\bz,\bv)\, \,e^{-\tau(\bx,\bw-\bv)} , 
$$
where 
$$
H_{\mC(\bw)}(\bz,\bv) \,=\, \left\{
\begin{array}{ll}
1, & \textrm{if \,} \bz\in\mC(\bw),
\\
\cos(\alpha(\bz,\bv)), & \textrm{else,\,} 
\end{array}\right.
$$
where $\alpha(\bz,\bv) \in (0,2\pi)$ corresponds to the relative
azimuthal angle of $\bz$ in the frame constructed from $\bv$ written in
spherical coordinates $\{\be_{|\bv|}, \be_{\phi}, \be_{\theta}\}$.
 
We note that relation  (\ref{OmegaN})  can be written
\begin{equation}
\Omega^N(t,\bx,\bv) \,=\, -\int_{\RR^3\times\RR^3} m(\bz,\bv,\bw) \,1_{\mK(\bv,\bw)}(\bz)  
  \,\,\br(\bz,\bw-\bv) \,f^N(t,\bx+\bz,\bw) d\bz\,d\bw,
 \label{OmegaNbis}
\end{equation}
which is a convolution product with respect to the  space variable
$\bx\in\RR^3$. Clearly, the formal mean-field limit of the particle system modeled by the kinetic
system (\ref{fN}), (\ref{OmegaNbis}) is given by the following system:
\begin{equation}
\left\{
 \begin{array}{l}
\ds\partial_t f + \bv\cdot \nabla_\bx f \,-\, \nabla_\bx V
  \cdot\nabla_\bv f  \,+\,  \nabla_\bv \cdot \left(\bv\wedge\Omega_f \,f\right) \,=\, \sigma\,\nabla_\bv \cdot
(\bv f), 
\\
\,
\\
\ds\Omega_f(\bx,\bv) = -\int_{\RR^3\times\RR^3} m(\bz,\bv,\bw) \,\mathds{1}_{\mK(\bv,\bw)}(\bz)  
  \,\,\br(\bz,\bw-\bv) \,f(t,\bx+\bz,\bw) d\bz\,d\bw,
\\
\,
\\
\ds f(t=0)\,=\, f_0 \in \mP_1 \cap L^\infty(\RR^6),
\end{array}\right.
\label{mf_trans}
\end{equation}
where we denote  by $\mP_1(\RR^6)$, the set of probability measures in $\RR^6$
with first bounded moment, which is a complete metric space endowed with the Monge-Kantorovich-Rubinstein distance. 
The Monge-Kantorovich-Rubinstein
distance, also called 1-Wasserstein distance, is also equivalent to the Bounded Lipschitz distance
$$
d_1(f, g) = \sup \left\{\left|\int_{\RR^6} \varphi(\bz) df(\bz) -
    \int_{\RR^6} \varphi(\bz) dg(\bz)\right|, \varphi\in {\rm
    Lip}(\RR^6), \,{\rm Lip}(\varphi) \leq 1  \right\},
$$
where Lip($\RR^6$) denotes the set of Lipschitz functions on $\RR^6$ and Lip($\varphi$) respectively the Lipschitz constant of a
function $\varphi$.

It is an open problem to rigorously show that this convergence
 holds. On the one hand, the lack of regularity of the
 velocity field in (\ref{eq:dynamics}) , due to the
sharpness of the sensitivity regions $\mK_i$ given in (\ref{Ki}),
prevents classical arguments from deriving rigorously the mean-field
limit. We refer to the recent work in \cite{Carillo-0}, where the
authors show the rigorous proof of the mean-field limit of a system of interacting
particles where each particle only interacts with those inside a local region whose shape depends on the
position and velocity of the particle. The argument is based on Filippov's theory \cite{filippov} allowing
to have a well-defined notion of solutions via differential inclusions. 
On the other hand, the additional work to take care concerns the
control of the error term in $d_1$ between weak solutions to
(\ref{mf_trans}) and empirical measures associated to differential
inclusions to (\ref{eq:dynamics}). 

Suppose that the empirical measure $f^N$ at time t = 0 converges in the weak star topology of bounded
measures towards a smooth function $f_0$ such that 
$$
d_1(f^N(0),f_0) \rightarrow 0, \quad{\rm when }\,N \rightarrow \infty.
$$
Following \cite{Carillo-0}, we may define the solution $f^N(t)$ to
(\ref{fN}) and $f(t)$ to (\ref{mf_trans})  thanks to the theory of
characteristics, hence it remains to establish a stability estimate as
$$
d_1(f^N(t),f(t)) \leq \, e^{C\,t} \, d_1(f^N(0),f_0), \quad \forall
\,t \,\in [0,T], 
$$
where $C>0$ is a positive constant depending on $f$. We will admit
that such a result is true and leave a rigorous convergence proof to
future work following \cite{Carillo-0}. 

For (\ref{mf_trans}) we can prove an analogous property as Proposition
\ref{prop:1} for (\ref{eq:dynamics})
\begin{theorem}
Consider a smooth potential $V(\bz)\geq 0$ and $V\in \mC^1(\RR^3)$. Assume that $f_0\in L^1 \cap L^\infty(\RR^6)$, with  $f_0\geq 0$ and 
$$
\int_{\RR^6} \left( |\bx|^2 + |\bv|^2\right) f_0(\bx,\bv) \,d\bx\,d\bv < \infty.
$$
Then for any $T>0$, there exists a  weak solution to (\ref{mf_trans}) such that
for almost every $t\in [0,T]$,
$$
f(t) \in L^1 \cap L^\infty(\RR^6),\quad \int_{\RR^6} \left( |\bx|^2 + |\bv|^2\right) f(t,\bx,\bv) \,d\bx\,d\bv < C(T,f_0)
$$
and for any $\varphi \in \mC_c^\infty([0, T)\times \RR^6)$,
\begin{equation}
\int_0^T\int_{\RR^6} f(t) \left( \partial_t \varphi +
    \bv\cdot\nabla_\bx\varphi - \left( \nabla_\bx V  - \bv\wedge
      \Omega_f + \sigma \bv \right) \cdot\nabla_\bv\varphi\right)\,
d\bx\,d\bv\,dt \,+\, \int_{\RR^6} f_0 \,\varphi(0) d\bx\,d\bv\,=\, 0.
\label{weak}
\end{equation}
Moreover, we have
$$
\frac{d}{dt} \int_{\RR^6} \left( \frac{|\bv|^2}{2} + V(\bx)\right)\,
f(t,\bx,\bv) d\bx\,d\bv  \,\leq\, -\sigma\,\int_{\RR^6} {|\bv|^2}\,
f(t,\bx,\bv) d\bx\,d\bv 
$$
\end{theorem}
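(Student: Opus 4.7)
The plan is to construct weak solutions by a regularisation together with a compactness argument, in the spirit of the classical theory for Vlasov-type equations, adapted to the nonlocal discontinuous operator $\Omega_f$. A key preliminary observation is that the effective kernel $m(\bz,\bv,\bw)\,\br(\bz,\bw-\bv)$ is pointwise bounded: the factor $m$ carries the normalisation $1/|\br|$, so that $|m\,\br|\leq 8\pi$; the indicator $\mathds{1}_{\mK(\bv,\bw)}$ further confines $\bz$ to an infinite tube of transverse radius $R$, and the weight $e^{-\tau}$ provides integrable decay in the longitudinal direction. Consequently $\|\Omega_f\|_\infty$ can be controlled by $\|f\|_{L^\infty_\bx L^1_\bv}$ together with a first moment of $f$ in $\bv$, and this estimate is the cornerstone of all subsequent bounds.

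First I would regularise by replacing the characteristic functions entering the definition of $\mK(\bv,\bw)$ and $H_{\mC(\bw)}$ by smooth cut-offs $\chi_\epsilon$ converging a.e.\ to the true indicators, and by truncating $\br(\bz,\bu)$ near $\bz=0$. For the resulting smooth and bounded field $\Omega_f^\epsilon$, a classical solution $f^\epsilon$ to the mollified equation is obtained by a Picard iteration: given $f^{\epsilon,(n)}$ one computes $\Omega^{\epsilon,(n)}$, solves the linear transport equation with drift $(\bv,-\nabla_\bx V+\bv\wedge\Omega^{\epsilon,(n)}-\sigma\bv)$ by the method of characteristics to obtain $f^{\epsilon,(n+1)}$, and closes the loop by contraction in a suitably weighted $1$-Wasserstein distance on short time intervals, extending the solution by continuation.

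The next task is to derive $\epsilon$-uniform a priori estimates for $f^\epsilon$. Non-negativity is propagated along characteristics. Mass is conserved because the transport is in divergence form with vanishing boundary contributions, so $\|f^\epsilon(t)\|_{L^1}=\|f_0\|_{L^1}$. Reading the equation in advective form along characteristics one gets $\frac{d}{dt}f^\epsilon = (3\sigma - \nabla_\bv\cdot(\bv\wedge\Omega_f^\epsilon))f^\epsilon$, and since after regularisation $\nabla_\bv \Omega_f^\epsilon$ is bounded by a function of $\|f^\epsilon\|_{L^1\cap L^\infty}$, a Gronwall step yields an $\epsilon$-uniform $L^\infty$ bound on any finite time interval. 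For the second moment one multiplies the kinetic equation by $|\bx|^2+|\bv|^2$ and integrates by parts: the gradient-of-potential term is controlled by Cauchy--Schwarz together with $V\in\mC^1$, the rotation contributes nothing thanks to the orthogonality $\langle\bv,\bv\wedge\Omega_f^\epsilon\rangle=0$, the friction term has the good sign, and Gronwall's lemma closes the estimate.

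With these uniform bounds a subsequence $f^\epsilon\rightharpoonup f$ converges weakly-$\ast$ in $L^\infty([0,T];L^1\cap L^\infty)$, and velocity-averaging lemmas combined with Aubin--Lions compactness upgrade the convergence to strong $L^1_{\rm loc}$ for the relevant velocity moments, which suffices to pass to the limit in the quadratic nonlinearity $\Omega_{f^\epsilon}^\epsilon\,f^\epsilon$ and to identify $f$ as a solution of the weak formulation (\ref{weak}). The energy estimate is then obtained by testing the equation against $\frac{|\bv|^2}{2}+V(\bx)$: the transport and potential contributions cancel, the rotation vanishes by the orthogonality already used, the friction produces $-\sigma\int|\bv|^2 f$, and the identity is transferred to the limit via lower semicontinuity of the kinetic energy, which is exactly why the conclusion is formulated as an inequality. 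The hard part, in my view, is the passage to the limit in the product $\mathds{1}_{\mK^\epsilon(\bv,\bw)} f^\epsilon$: one has to check that for a.e.\ $(\bv,\bw)$ the limit measure puts no mass on the Lebesgue-negligible boundary $\partial\mK(\bv,\bw)$, and this is precisely where the uniform $L^\infty$ bound on $f^\epsilon$ is indispensable.
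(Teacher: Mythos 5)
Your proposal follows essentially the same route as the paper: the same a priori estimates (mass conservation and nonnegativity, the pointwise bound on the kernel $m\,\br$ giving $\|\Omega_f\|_{L^\infty}\lesssim\|f_0\|_{L^1}$, the $L^p$/$L^\infty$ propagation, the energy identity via orthogonality of $\bv$ and $\bv\wedge\Omega_f$, and the second moments), combined with a regularization-and-compactness argument that the paper only invokes in one sentence. Your version is in fact more explicit than the paper's sketch about the construction of the regularized solutions and the delicate limit in the sharp indicator $\mathds{1}_{\mK(\bv,\bw)}$, which the paper leaves implicit.
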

\begin{proof}
We only give {\it a priori} estimates which allow to prove existence
of solutions by applying a classical regularizing process by
convolution. 

We consider a smooth solution to  (\ref{mf_trans}), which  is a six
dimensional advection equation in conservative form, hence we get
conservation of mass and nonnegativity of the solution leading to 
$$
\|f(t) \|_{L^1} = \|f_0\|_{L^1}, \quad \forall \,t \,\in \, [0,T].
$$
From this estimate and since $m(\bz,\bv,\bw)\,|\br(\bz,\bw-\bv)|\leq
1$, for any $\bz \in \mK(\bv,\bw)$, we prove that 
$$
\|\Omega_f \|_{L^\infty}\,\leq \, \|f(t)\|_{L^1} \,=\, \|f_0\|_{L^1}, \quad \forall \,t \,\in \, [0,T].
$$
Furthermore, since the advection field is locally bounded in $L
^\infty$, for any $p>1$, we multiply (\ref{mf_trans}) by
$p\,|f|^{p-1}$ and integrate over $(\bx,\bv)\in\RR^6$. It yields to the existence of 
a constant $C>0$ depending on $\|f_0\|_{L^1}$ such that
$$
\|f(t) \|_{L^p} = \|f_0\|_{L^p}\,e^{C\,t}, \quad \forall \,t \,\in \, [0,T].
$$ 
Next we multiply  (\ref{mf_trans}) by
$\frac{1}{2}|\bv|^2 + V(\bx)$ and integrate over
$(\bx,\bv)\in\RR^6$. After an integration by part, we get 
$$
\frac{d}{dt} \int_{\RR^6} \left( \frac{|\bv|^2}{2} + V(\bx)\right)\,
f(t,\bx,\bv) d\bx\,d\bv  \,\leq\, -\sigma\,\int_{\RR^6} {|\bv|^2}\,
f(t,\bx,\bv) d\bx\,d\bv,
$$ 
which allows to control the following quantity $|\bv|^2\, f(t) $ in
$L^1(\RR^6)$. Finally, we multiply  (\ref{mf_trans}) by
$|\bx|^2$ and integrate over $(\bx,\bv)\in\RR^6$,
\begin{eqnarray*}
\frac{d}{dt} \int_{\RR^6} {|\bx|^2}\, f(t,\bx,\bv) d\bx\,d\bv  &=& 2\,\int_{\RR^6} \bx\cdot \bv\, f(t,\bx,\bv) d\bx\,d\bv,
\\
&\leq & 2\,\left(\int_{\RR^6} |\bx|^2 f(t,\bx,\bv) d\bx\,d\bv\right)^{1/2}\; \left(\int_{\RR^6} |\bv|^2 f(t,\bx,\bv) d\bx\,d\bv\right)^{1/2},
\end{eqnarray*}
hence we get the estimate on the second order moment in space thanks
to the previous results.
 
Finally, from these {\it a priori} estimates, we get enough compactness to pass
to the limit in a regularized problem  in the nonlinear term
$f(t) \,\bv\wedge \Omega_f$ in (\ref{weak}) and prove existence of
weak solutions on any finite time interval $[0,T]$. 
 \end{proof}
Next, from the kinetic equation  (\ref{mf_trans}), we can construct an hydrodynamical system by considering a mono-kinetic
approximation given by 
$$
f(t,\bx,\bv) = \rho(t,\bx) \, \delta(\bv - \bU(t,\bx)),
$$
hence the couple $(\rho,\bU)$ is solution to 
$$
\left\{
\begin{array}{l}
\ds\partial_t \rho \,+\, \nabla_\bx\cdot (\rho\,\bU) \,=\, 0, 
\\
\,
\\
 \ds\partial_t \rho\bU \,+\, \nabla_\bx\cdot (\rho\,\bU\,\otimes\,\bU)
  \,\,=\,\, -\nabla_\bx V \,\rho  \,+\, \rho\,\bU \wedge \Omega \,-\, \sigma \,\rho\, \bU, 
\end{array}\right.
$$
where $\Omega$ is given by
$$
\Omega(t,\bx)\,=\, -\int_{\RR^3}
m(\bz,\bU(t,\bx),\bU(t,\bx+\bz))\,
\mathds{1}_{\mK(\bU(t,\bx),\bU(t,\bx+\bz))}(\bz)\,\br(\bz,\bU(t,\bx+\bz),
\bU(t,\bx) ) \,\rho(\bx+\bz) \,d\bz.
$$

\subsection{Mean field model with noise}
We now consider the case (\ref{eq:dynamics}) with Gaussian noise
$\nu>0$. This problem has been adressed in \cite{bolley}, where the
authors show that the $N$ interacting processes
$(\bx_{i,t},\bv_{i,t}))_{t\geq 0}$ respectively behave as
$N\rightarrow \infty$ like the auxiliary processes
$(\bar\bx_{i,t},\bar\bv_{i,t})_{t\geq 0}$, solutions to
\begin{equation}
\label{ajac}
\left\{
\begin{array}{l}
d \bx_{i,t} = \bv_{i,t} \,dt,
\\
d \bv_{i,t} = \, \left(\bv_{i,t} \,\wedge \Omega_{f_t}(\bx_{i,t}, \bv_{i,t}) \,-\,\nabla V(\bx_{i,t}) \,-\,\sigma \,\bv_{i,t} \right) \,dt
  \,+\, \sqrt{2 \nu} \circ dB^i_t,
\end{array}\right.
\end{equation}
where $f_t:={\rm law}(\bx_{i,t},\bv_{i,t})$ and 
$$
\Omega_{f_t}(\bx_{i,t}, \bv_{i,t})  \,=\, -\int_{\RR^3\times\RR^3} m(\bz,\bv_{i,t},\bw) \,\mathds{1}_{\mK(\bv,\bw)}(\bz)  
  \,\,\br(\bz,\bw-\bv_{i,t}) \,f(t,\bx+\bz,\bw) d\bz\,d\bw. 
$$
Note that (\ref{ajac}) consists of $N$ equations which can be solved independently of each other. Each
of them involves the condition that $f_t$ is the distribution of $(\bx_{i,t}, \bv_{i,t})$, thus making it nonlinear.
The processes $(\bx_{i,t}, \bv_{i,t})_{t\geq 0}$ with $i
\in\{1,\ldots,N\}$ are independent since the initial conditions and
driving Brownian motions are independent.

We will  admit that these processes defined on $\RR^6$ are identically
distributed, and their common law $f_t$ at time $t$, as a measure on
$\RR^3\times \RR^3$ evolves according to the following Kolmogorov-Fokker-Planck equation
\begin{equation}
 \partial_t f + \bv\cdot \nabla_\bx f \,-\, \nabla_\bx V
  \cdot\nabla_\bv f  \,+\,  \nabla_\bv \cdot \left(\bv\wedge\Omega_f \,f\right) \,=\, \nabla_\bv \cdot
(\nu\,\nabla_\bv f \,+\,\sigma\bv f).
\label{mf_f}
\end{equation}

\section{Numerical experiments}
\setcounter{equation}{0}
\label{sec:4}
In this section we present simulations to show the effectiveness of
the collision avoidance procedure proposed in this paper at the
microscopic level (\ref{eq:dynamics}) with
(\ref{model:1})-(\ref{model:2}). 

We choose a smooth external potential $V$ such that 
$$
V(\bx) = \frac{1}{4}\,\left(1+|\bx-\bx_T|^2\right)^{1/2} 
$$
and the friction coefficient is fixed to $\sigma=1/4$. Furthermore to
emphasize the effect of the collision avoidance process we neglect the
noise and set $\nu=0$ in our simulations.

\subsection{Collision avoidance in the horizontal plane}

We first consider the simple situation where all particles move in a
direction parallel to the horizontal plane. Initially, all the particles are 
located in a circle and want to move on the opposite direction with an
initial velocity $\bv(0)= -\bx(0)/2$. Therefore in this very specific
situation, the collision point of all particles is the center of the circle. 

We consider the microscopic model \eqref{eq:dynamics} without any
noise $\nu=0$ and choose the radius of the circle delimiting the safety region  as depicted in Figure \ref{fig:1} such that $R=1$. For the vision cone given in
Definition \ref{VisionCone} we take $\kappa=\cos(2\pi/3)$ whereas the axis of rotation and the  turning
frequency are given in  (\ref{model:0})-(\ref{model:2}). Since the motion occurs in the horizontal plane, we
expect  the axis of rotation $r_{ij}$ to be colinear to the unit
vector $\be_z$. In Figure \ref{fig:test0-1}, we present the numerical results with two, three, four and nine particles and
observe that the present model preserves perfectly the symmetry.
Furthermore, due to the perception phase, the collision is anticipated which
seems to guarantee a smooth trajectory and not a brutal change of direction.

\begin{figure}[ht!]
\begin{center}
 \begin{tabular}{cc}
\includegraphics[width=8.cm]{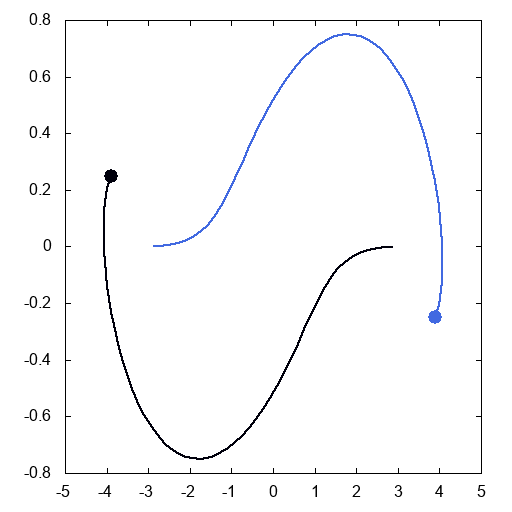} &    
\includegraphics[width=8.cm]{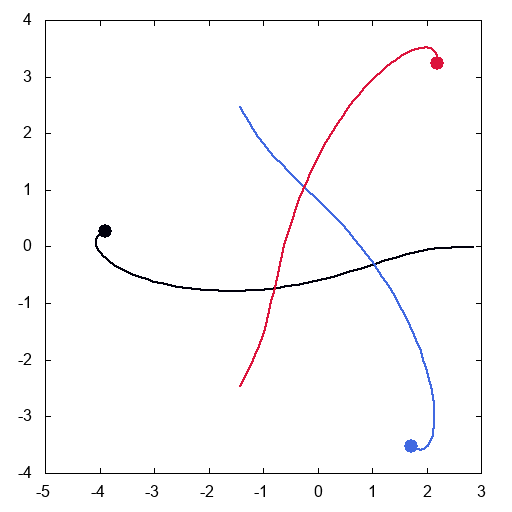} 
\\
(a)  & (b)  
\\
\includegraphics[width=8.cm]{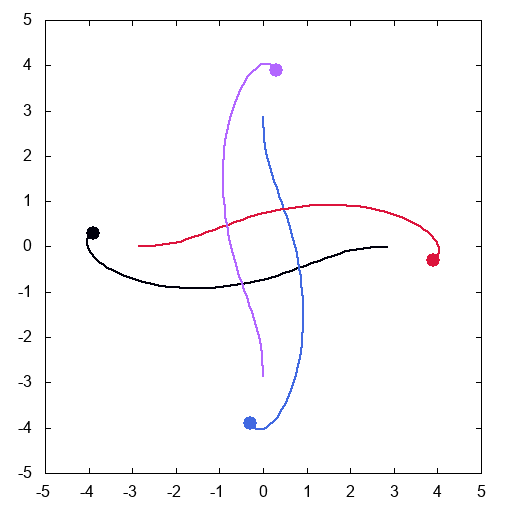} &    
\includegraphics[width=8.cm]{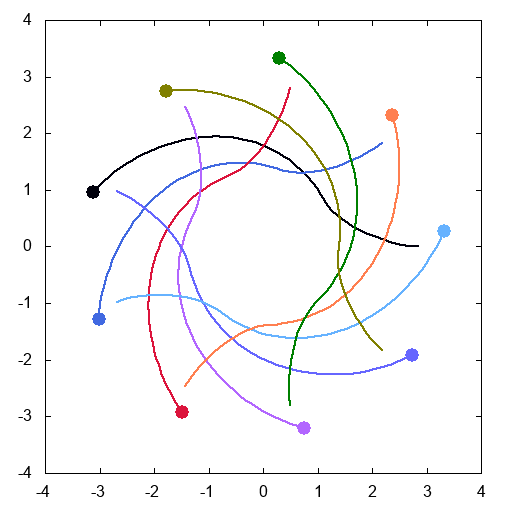} 
\\
(c)  & (d)  
\end{tabular}
\caption{\label{fig:test0-1}
{\bf Collision avoidance in the horizontal plan.} space trajectory in
the horizontal plane for (a) 2 particles, (b) 3 particles, (c) 4
particles and  (d) 9 particles.}
 \end{center}
\end{figure}

These numerical results reproduce the classical trajectories as in
\cite{Roelofsen}. The particles move in a straight line to its own
target, then when it approaches the collision point, it starts to
rotate and finally deviates again to reach the target point.
\\

Furthermore, we illustrate the fact that in some situations, collision
cannot be avoided in particular when  the relative velocity between
interacting particles is too large, that is, the distance between two
interacting particles maybe very small.  For instance, we consider the previous situation
with three particles localized on the unit circle  but now we vary the modulus of the initial
velocity $\bv(0)=-\alpha\,\bx(0)$, with respect to $\alpha>0$.  The
results are presented in Figure \ref{fig:test0-2}, as it
is expected when the velocity of particles is too large, the distance
between  particles becomes smaller and smaller.

\begin{figure}[ht!]
\begin{center}
 \begin{tabular}{cc}
\includegraphics[width=8.cm]{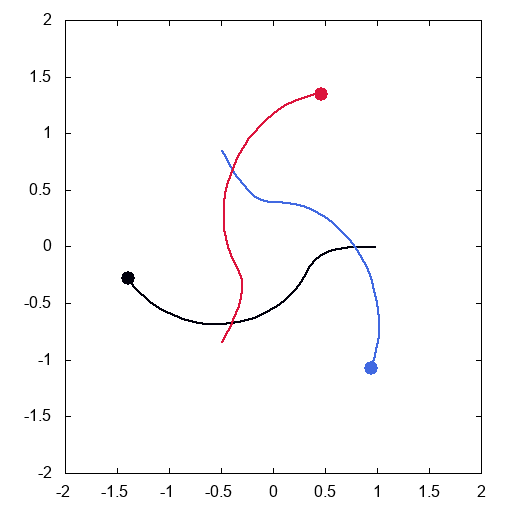} &    
\includegraphics[width=8.cm]{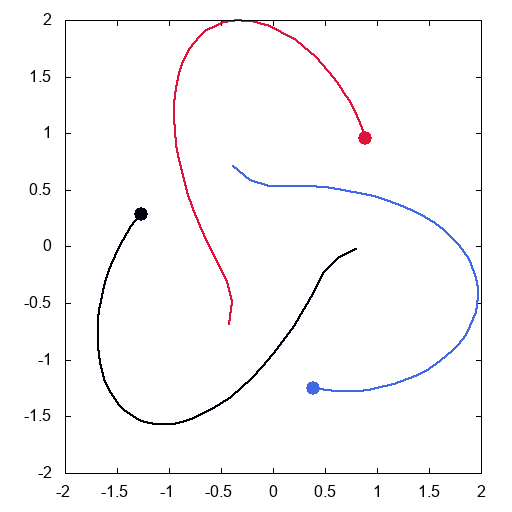} 
\\
(a)  & (b)  
\\
\includegraphics[width=8.cm]{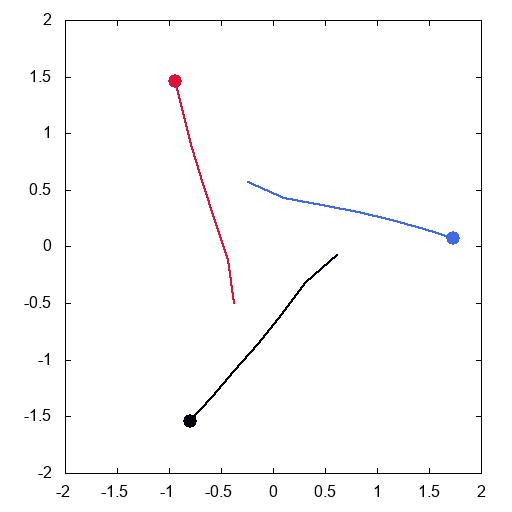} &    
\includegraphics[width=8.cm]{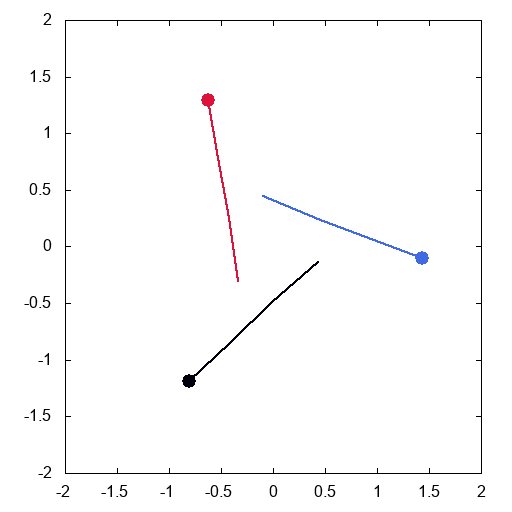} 
\\
(c)  & (d)  
\end{tabular}
\caption{\label{fig:test0-2}
{\bf Collision avoidance in the horizontal plan.} space trajectory in
the horizontal plane for three particles with different initial
velocities (a) $\bv(0)=-\bx(0)/10$, (b) $\bv(0)=-\bx(0)$,  (c) $\bv(0)=-2\bx(0)$ and  (d) $\bv(0)=-3\bx(0)$.}
 \end{center}
\end{figure}

\subsection{Influence of the vision cone}
We still consider the motion in the horizontal plane, but now the
particles are almost aligned to the $Ox$ axis and move initially along
this line where the particle behind has a larger speed than the one in
front of it, that is, for a small parameter $\epsilon=10^{-6}$, we
choose $\bx_1(0)=(-4,\epsilon,0)$ and $\bv_1(0)=(1,0,0)$, whereas $\bx_2(0)=(-2,0,0)$ and $\bv_2(0)=(1/2,0,0)$.

Furthermore, for each particle the target is also on the same
line. Thus, it is expected that the particle $(\bx_1,\bv_1)$ turns in order to
avoid a collision with $(\bx_2,\bv_2)$ whereas due to the restriction
of the vision cone, the second particle does not see the first one,
hence it continues its cruise in a straight line. 

Finally we also consider the same situation with three particles with
$\bx_3(0)=(-6,2\,\epsilon,0)$ and $\bv_3(0)=(2,0,0)$.
 
We present the numerical experiment in Figure \ref{fig:test1-1} for
two and three particles. In the first situation, we observe that
indeed the first particle deviates in order to avoid the collision,
whereas in the presence of three particles, the first one deviates
much more in order to avoid the collision with the second and the
third ones. The particle located in the front does not see the other
one coming from behind and does not deviate. This is a simple
illustration of the influence of the vision's cone.

\begin{figure}[ht!]
\begin{center}
 \begin{tabular}{cc}
\includegraphics[width=8.cm]{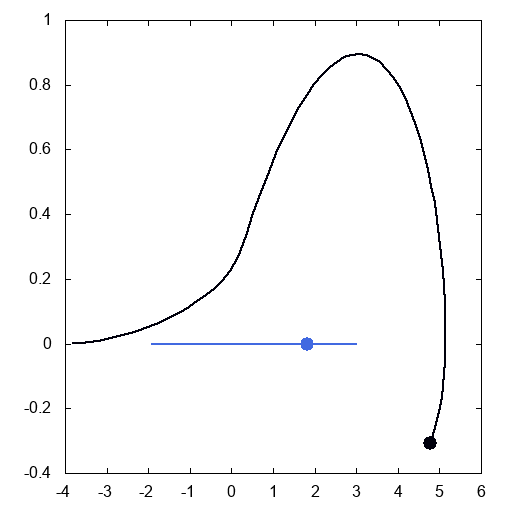} &    
\includegraphics[width=8.cm]{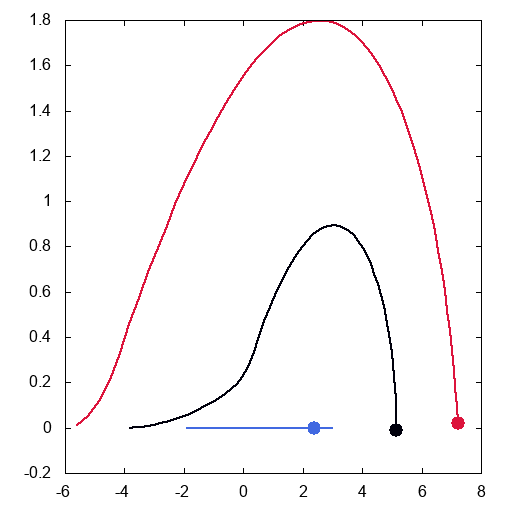} 
\\
(a)  & (b)  
\end{tabular}
\caption{\label{fig:test1-1}
{\bf Influence of the vision's cone.} space trajectory in
the horizontal plane for (a) 2 particles and  (b) 3 particles.}
 \end{center}
\end{figure}

\subsection{Collision avoidance in 3D}
We then consider the situation where all particles move in a three
dimensional space. All the particles are initially located in a ball and want to
move on the opposite direction with respect to the center of the ball. Therefore in this situation, 
the collision point of all particles is the center of the ball. 

We consider the microscopic model \eqref{eq:dynamics} without any
noise $\nu=0$ and choose $R=1$, and for the vision cone given in
Definition \ref{VisionCone} we take $\kappa=\cos(2\pi/3)$ whereas the rotation axis and the  turning
frequency are given in  (\ref{model:0})-(\ref{model:2}).

In that case we recover a situation similar to the previous one
but in three dimensions and the rotation axis  is no more colinear
to the $\be_z$ unit vector. Thanks to the turning operator, the
collision is avoided  and the particles have a smooth trajectory in 3D
as it can be shown in Figure \ref{fig:test2-1} for two or three
particles. With more particles we recover the same kind of results as for the
motion in the horizontal plane.

\begin{figure}[ht!]
\begin{center}
 \begin{tabular}{cc}
\includegraphics[width=8.cm]{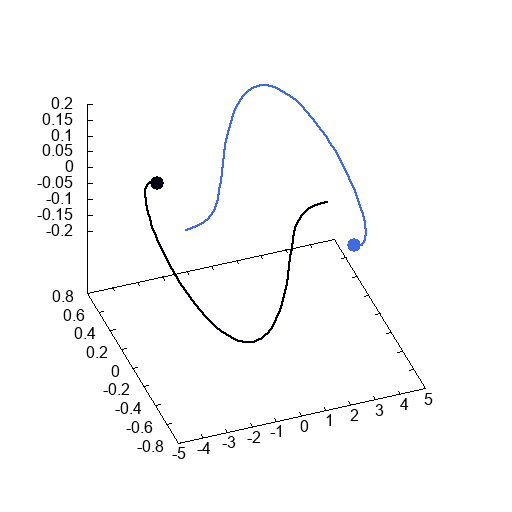} &    
\includegraphics[width=8.cm]{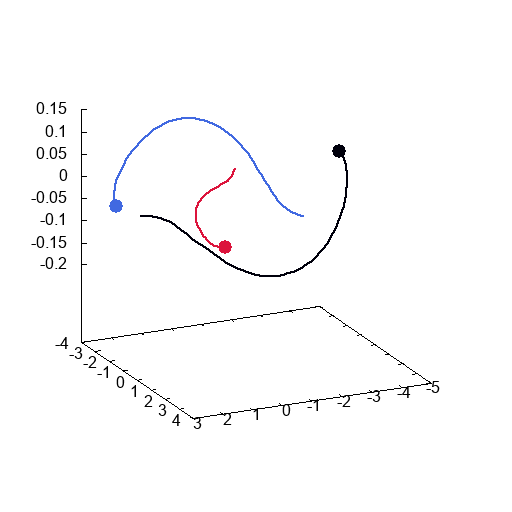} 
\\
(a)  & (b)  
\end{tabular}
\caption{\label{fig:test2-1}
{\bf Collision avoidance in 3D.} space trajectories in
three dimension for (a) 2 particles and  (b) 3 particles.}
 \end{center}
\end{figure}

\subsection{Moving around obstacles}
In this last example, we consider the motion of particles in presence
of fixed obstacles. The collision avoidance process follows the line
of Section \ref{sec:nc-int} with non-cooperative interactions. We
first introduce the point $O$ which represents the point of coordinate $x_O$ defined as 
$$
\bx_O \,=\, \argmin_{\bx \in \partial O \cap \mK_i(t^0)}
d\left(\bx_i(t),\bx\right)
$$
hence the force field acting on the particle $i\in\{1,\ldots,N\}$ is 
$$
\bF^{\rm obs}(\bx_i, \bv_i)  \,\,=\,\, \omega_{iO}\, H(\alpha_{iO})\, \bv_i \wedge \br_{iO}, 
$$
with $\omega_{i0}>0$ and  a rotation axis $\br_{iO}$ given by
\begin{equation*}
\br_{iO} \,:=\, \frac{\bv_i\wedge \bk_{iO}}{d_{iO}}\,,
\end{equation*}
with $\bk_{iO}$ the unit vector in the direction $\bx_O-\bx_i$ and
$d_{iO}=|\bx_O-\bx_i|$. The function $H_{iO}$ is given by (\ref{model:1bis}) and the
frequency $\omega_{iO}>0$ is 
$$
\omega_{iO} \,=\, \frac{16\pi}{|\br_{iO}|} \,e^{-\tau_{i0}}, 
$$
with $\tau_{iO}>0$ given by (\ref{TimeToInteraction}).

The particles are attracted to the target $\bx_T=(7,7,0)$, whereas the
obstacles are represented by two balls $B(\bx_0,1/2)$ and $B(\bx_1,1)$
with $\bx_0=(2,2,0)$ and $\bx_1=(5,5,0)$. 

We represent in Figure \ref{fig:test3-1} the space trajectories at
different time. The particles are initially located on a sphere
centered in $(-1,-1,0)$ with a random velocity. On the one hand we
observe that due to the attractive potential, all particles choose the
same direction and thanks to the collision avoidance operator, they do
not collide.  On the other hand, when they approach the obstacle they
deviate and remain relatively far from the obstacles. Finally at time
$t=20$, all particles are moving around the target point.
 
\begin{figure}[ht!]
\begin{center}
 \begin{tabular}{cc}
\includegraphics[width=8.cm]{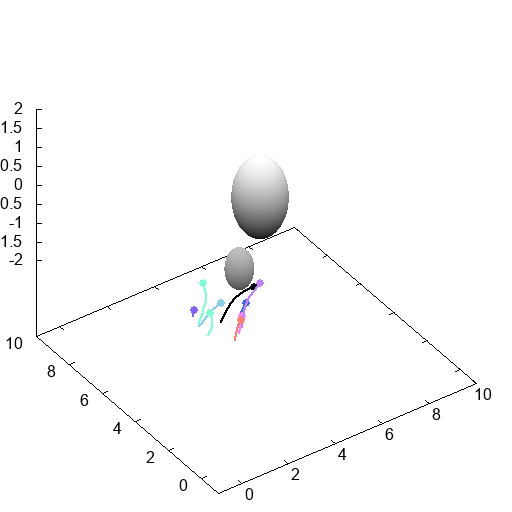} &    
\includegraphics[width=8.cm]{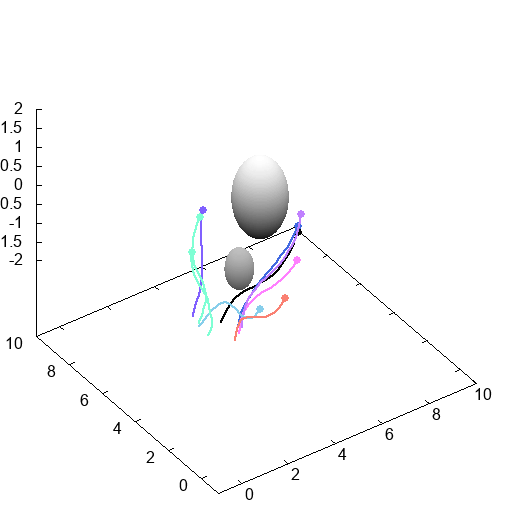} 
\\
$t=05$  & $t=10$
\\
\includegraphics[width=8.cm]{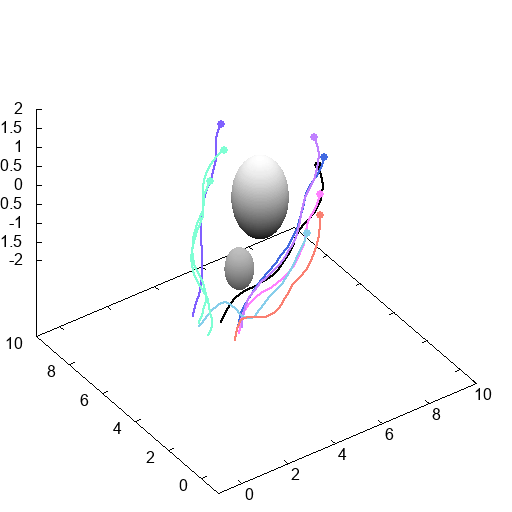} &    
\includegraphics[width=8.cm]{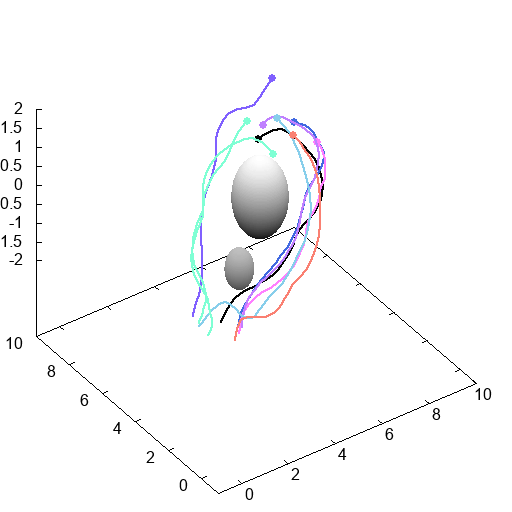} 
\\
$t=15$  & $t=20$
\end{tabular}
\caption{\label{fig:test3-1}
{\bf Moving around obstacles.} space trajectory in
three dimension at different time $t=5, \, 10, \,15$ and $20$.}
 \end{center}
\end{figure}

\section{Conclusion and Perspectives}
\setcounter{equation}{0}
\label{sec:5}
In this article, we have proposed  a three dimensional dynamical model
for collision avoidance based on previous works in two dimension for
pedestrian flows \cite{Degond-1, Degond-2,Moussaid}. This individual
based model relies on a vision-based framework: the particles analyze the scene
and react to the collision threatening partners by changing their
direction of motion. We have also  proposed a kinetic version of this
individual based model and perform some numerical experiments which
illustrate the ability of the microscopic model to avoid collisions in
three dimensions.

In a future work, the approach  developed in Section
\ref{sec:3}, which is based on a mean field model, will be investigated to
study the collision avoidance process in the presence of many
vehicles. Indeed for a large number of particles, sensors are not
able to distinguish each individual but only clouds of particles are
detected, the application of mean field models may contribute on the
design of efficient algorithms since the sum of interacting particles
is replaced by a self consistent force. 

On the other hand, more precise models can be applied to describe the
motion in three dimension of vehicles as multi-agent dynamics where each agent is described
by its position and body attitude. More precisely, each agent travels in a given direction
and its frame can rotate around it adopting different configurations. In this
manner, the frame attitude is described by three orthonormal axes
giving rotation matrices \cite{DFM}.

\section{Aknowledgement}
The authors thank anonymous referees and highly appreciate their valuable
comments and suggestions, which significantly contributed to improve
the quality of the justification of the model in Section \ref{sec:2.2}.  
\bibliographystyle{plain}

\begin{thebibliography}{99} 

\bibitem{bolley}
{\sc F. Bolley, J. A. Canizo, J. A. Carrillo,} 
\newblock Mean-field limit for the
stochastic Vicsek model 
\newblock Appl. Math. Letters {\bf 25}, pp. 339-343 (2012) 

\bibitem{Bonabeau}
{\sc Bonabeau E., Dorigo M. and Theraulaz G.}
\newblock Swarm Intelligence : From Natural to Artificial Systems
\newblock {\it Oxford Univ. Press} (1999).

\bibitem{Camazine}
{\sc Camazine, S., Deneubourg, J.-L., Franks, N. R., Sneyd,
J., Theraulaz, G., and Bonabeau, E.} 
\newblock Self-Organization in Biological Systems 
\newblock {\it Princeton University Press}, Princeton (2001).


%\bibitem{Chang-1}
%{\sc Chang D.E., Shadden S., Marsden J.E., Olfati-Saber R.},
%\newblock Collision avoidance for multiple agent systems.  
%\newblock {\it Proc. of the 42nd IEEE Conference on Decision and Control}, {\bf 359} (1) pp. 539-543, (2003).

%\bibitem{Chang-2}
%{\sc Chang D.E.,Shadden S., Marsden J.E., Olfati-Saber R. },
%\newblock Gyroscopic Forces and Collision Avoidance with Convex Obstacles.
%\newblock {\it Proc. of the 60th Birthday Conference in honor of Art Krener}, (2003)

\bibitem{canizo}
{\sc J. A. Canizo, J. A. Carrillo, J. Rosado,} 
\newblock A well-posedness theory in measures for some kinetic models
of collective motion, 
\newblock Math. Mod. Meth. Appl. Sci. {\bf 21}, 515-539, (2011).

\bibitem{Carillo-0}
{\sc J. A. Carrillo, Y.-P. Choi, M. Hauray, S. Salem,} 
\newblock Mean-field limit for collective behavior models with sharp
sensitivity regions, 
\newblock J. European Math. Soc. (2017)

\bibitem{Carillo-1}
{\sc Carrillo J.A.,  Fornasier M., Toscani G., Vecil F.}
\newblock Particle, Kinetic, and Hydrodynamic Models of Swarming.
\newblock {\it Naldi, G., Pareschi, L., Toscani, G. (eds.) Mathematical Modeling of Collective 
Behavior in Socio-Economic and Life Sciences}, Series: Modelling and Simulation in Science and Technology, Birkhauser, (2010), 297-336.

 \bibitem{Carillo-2}
{\sc Carrillo, José A.; Klar, Axel; Martin, Stephan; Tiwari, Sudarshan,}
\newblock Self-propelled interacting particle systems with roosting
 force. 
\newblock {\it Math. Models Methods Appl. Sci.} {\bf 20} (2010), suppl. 1,
 1533–1552.


\bibitem{ChakravarthyGhose1998}
{\sc Chakravarthy A. and Ghose D.}, 
\newblock Obstacle avoidance in a dynamic environment: a collision
cone approach. 
\newblock {\it IEEE Transactions on Systems, Man and Cybernetics, Part A:
Systems and Humans}, {\bf 28}, pp. 562–574 (1998).


\bibitem{Degond-1}
{\sc Degond P., Appert-Rolland C., Moussaid M., Pettre J., Theraulaz G.}
\newblock Vision-based macroscopic pedestrian models
\newblock {\it Kinetic and Related Models}, (2013). 

\bibitem{Degond-2}
{\sc Degond P., Appert-Rolland C., Moussaid M., Pettre J., Theraulaz G.}
\newblock A Hierarchy of Heuristic-Based Models of Crowd Dynamics
\newblock {\it J Stat Phys} {\bf 152}  pp. 1033-1068, (2013). 

\bibitem{DFM}
{\sc Degond P., Frouvelle A., Merino-Aceituno S.,} 
\newblock A new flocking model through body attitude coordination. 
\newblock {\it Mathematical Models and Methods in Applied Sciences} (2017). 

\bibitem{Eby1994}
{\sc Eby M. S.},
\newblock A self-organizational approach for resolving air traffic conflicts
\newblock {\it The Lincoln Laboratory Journal}, {\bf 7}, pp. 239–254 (1994).

\bibitem{EbyKelly1999}
{\sc Eby M. S. and Kelly W.E.},
\newblock Free flight separation assurance using distributed algorithms
\newblock {\it Proceedings of the IEEE Aerospace
Conference}, San Francisco, USA, pp. 429–441 (1999).

\bibitem{Tiwari}
 {\sc Etikyala, R., Göttlich, S.; Klar, A.; Tiwari, S.}
\newblock Particle methods for
 pedestrian flow models: from microscopic to nonlocal continuum
 models. 
\newblock {\it Math. Models Methods Appl. Sci.} {\bf 24} (2014), no. 12, 2503–2523

\bibitem{Tiwari2}
{\sc  Etikyala, R.; Göttlich, S.; Klar, A.; Tiwari, S.} 
\newblock A macroscopic model
for pedestrian flow: comparisons with experimental results of
pedestrian flow in corridors and T-junctions. 
\newblock {\it Neural Parallel
Sci. Comput.} {\bf 22} (2014), no. 3, 315–330.

\bibitem{filippov}
{\sc A. F. Filippov}, 
\newblock Differential equations with discontinuous righthand sides, Differential Equations with Discontinuous
Right-Hand Sides. Mathematics and Its Applications, Kluwer Academic, Dordrecht, (1988).

\bibitem{FraichardAsama2004}
{\sc Fraichard T. and Asama H.},
\newblock Inevitable collision states – a step towards safer robots?
\newblock {\it Advanced Robotics}, {\bf 18}, pp. 1001–1024 (2004).

\bibitem{Garcia-Keshmini}
{\sc  Garcia G. A.,  Keshmiri S. S.}
\newblock Biologically inspired trajectory generation for swarming UAVs using topological distances.
\newblock {\it Elsevier Aerospace Science and Technology 54} 312-319, (2016).

\bibitem{Giardina}
{\sc Giardina I.} 
\newblock Collective behavior in animal groups: theoretical models and empirical studies 
\newblock {\it HFSP Journal 2:205-219} (2008).

\bibitem{Golse-1}
{\sc Golse F.},
\newblock The mean field limit for the dynamics of large particle systems.
\newblock {\it Journ\'ees \'equations aux d\'eriv\'ees partielles}, {\bf 9} pp. 1-47, (2003).

\bibitem{GomezFraichard2009}
{\sc Gomez M. L. and Fraichard T.}, 
\newblock Benchmarking collision avoidance schemes for dynamic environments
\newblock {\it Proceedings of the ICRA Workshop
on Safe Navigation in Open and Dynamic Environments}, Kobe, Japan (2009).

\bibitem{Hoekstra2000}
{\sc Hoekstra J.,  Ruigrok R. C. J. and van Gent R. N. H. W.}, 
\newblock Free flight in a crowded airspace? 
\newblock {\it 3rd USA/Europe Air Traffic Management RD Seminar},
Napoli, Italy (2000).

\bibitem{Han}
{\sc Han J., Xu Y., Di L., and Chen Y.Q.}
\newblock Low-cost multi-UAV technologies for contour mapping of nuclear radiation field
\newblock {\it J. Intell. Robot. Syst.} {\bf 70}, no 1-4, pp. 401-410, (Apr. 2013).


\bibitem{Hsu_AMS02}
{\sc Hsu E.~P.}, 
\newblock Stochastic Analysis on Manifolds, 
\newblock{\it Graduate Series in
Mathematics}, {\bf 38},  American Mathematical Society, Providence, Rhode Island, 2002.

%\bibitem{Inada-Takanobu}
%{\sc Inada Y., Takanobu H.}
%\newblock Flight-Formation Control of  Air  Vehicles  Based  on  Collective  Motion  Control of  Organisms.
%\newblock {\it Proc.  18th  IFAC  Symposium  on Automatic Control in  Aerospace} (IFAC 2010), Nara, Japan, 2010. 

\bibitem{Kopfstedt}
{\sc Kopfstedt T., Mukai M., Fujita M., and Ament C.} 
\newblock Control of formations of UAVs for surveillance and reconnaissance missions 
\newblock {\it Proc. 17th IFAC World Congr.} pp. 6–11, (Jul. 2008).

\bibitem{Lacher2007}
{\sc Lacher  A. R.,  Maroney D. R. and   Zeitlin A. D.}, 
\newblock Unmanned aircraft collision avoidance: Technology assessment and evaluation
methods. 
\newblock {\it Proceedings of the 7th USA/Europe Air Traffic Management
Research and Development Seminar}, Barcelona, Spain (2007).

\bibitem{sde}
{\sc B. Oksendal},  
\newblock Stochastic Differential Equations: An Introduction with Applications.
\newblock Springer, New York, 6th edition  (2003).

\bibitem{Moussaid}
{\sc Moussaïd M., Helbing D., Theraulaz G.}
\newblock How simple rules determine pedestrian behavior and crowd disasters.
\newblock {\it Proceedings  of  the  National  Academy  of  Science} 108:6884-6888, (2011).

\bibitem{Parrish}
{\sc Parrish J. and Edelstein-Keshet L.}
\newblock Complexity, pattern, and evolutionary trade-offs in animal aggregation, 
\newblock {\it Science 294} 99-101 (1999).

\bibitem{Roelofsen}
{\sc Roelofsen S., Martinoli A. and Gillet D.}
\newblock 3D Collision Avoidance Algorithm for Unmanned Aerial Vehicles with Limited Field of View Constraints.
\newblock Conference on Decision and Control, Las Vegas, Nevada, USA, (2016).

\bibitem{spohn}
{\sc H. Spohn}, 
\newblock Large scale dynamics of interacting particles, 
\newblock Springer, Berlin, 1991.

\end{thebibliography}

\begin{flushleft} \signCP \end{flushleft}
%\vspace*{-44mm}
\begin{flushright} \signFF \end{flushright}

\end{document}